\newcommand{\Z}{{\textsf{\textup{Z}}}}
\newtheorem{thm}{Theorem}
\newtheorem{cor}[thm]{Corollary}
\newtheorem{defi}[thm]{Definition}
\newtheorem{rem}[thm]{Remark}
\newtheorem{nota}[thm]{Notation}
\newtheorem{princ}[thm]{Principle}
\newtheorem{ack}[thm]{Acknowledgement}
\newtheorem*{tempo*}{Template}
\newtheorem{lemma}[thm]{Lemma}
\newtheorem{claim}[thm]{Claim}
\newcommand\be{\begin{equation}}
\newcommand\ee{\end{equation}} 
\def\bdefi{\begin{defi}}
\def\edefi{\end{defi}}
\def\bnota{\begin{nota}\rm}
\def\enota{\end{nota}}
\def\ZF{\textup{\textsf{ZF}}}
\def\L{\textsf{\textup{L}}}
 \def\r{\mathbb{r}}
\def\RCA{\textup{\textsf{RCA}}}
\def\({\textup{(}}
\def\){\textup{)}}
\def\s{\textup{\textsf{s}}}
\def\RCAo{\textup{\textsf{RCA}}_{0}^{\omega}}
\def\ACAo{\textup{\textsf{ACA}}_{0}^{\omega}}
\def\WKL{\textup{\textsf{WKL}}}
\def\WWKL{\textup{\textsf{WWKL}}}
\def\bye{\end{document}}
\def\N{{\mathbb  N}}
\def\Q{{\mathbb  Q}}
\def\R{{\mathbb  R}}
\def\SS{\textup{\textsf{S}}}
\def\di{\rightarrow}
\def\asa{\leftrightarrow}
\def\ACA{\textup{\textsf{ACA}}}
\def\QFAC{\textup{\textsf{QF-AC}}}
\def\AC{\textup{\textsf{AC}}}
\def\CLC{\textup{\textsf{ClC}}}
\def\OC{\textup{\textsf{OC}}}
\def\NCC{\textup{\textsf{NCC}}}
\def\NCC{\textup{\textsf{NCC}}}
\def\HBC{\textup{\textsf{HBC}}}
\def\seq{\textup{\textsf{seq}}}
\numberwithin{equation}{section}
\numberwithin{thm}{section}
\begin{document}
\title{On sequential theorems in Reverse Mathematics}
\author{Dag Normann}
\address{Department of Mathematics, The University of Oslo, Norway, P.O. Box 1053, Blindern N-0316 Oslo}
\email{dnormann@math.uio.no}
\author{Sam Sanders}
\address{Department of Philosophy II, RUB Bochum, Germany}
\email{sasander@me.com}
\keywords{Reverse Mathematics, higher-order arithmetic, Heine-Borel theorem, semi-continuity, sequential theorems.}
\subjclass[2010]{03B30, 03F35}
\begin{abstract}
Many theorems of mathematics have the form that for a certain \emph{problem}, e.g.\ a differential equation or polynomial (in)equality, there exists a \emph{solution}.
The \emph{sequential} version then states that for a \emph{sequence} of problems, there is a \emph{sequence} of solutions.
The original and sequential theorem can often be proved via the same (or similar) proof and often have the same (or similar) logical properties, esp.\ if everything is formulated in the language of second-order arithmetic.  In this paper, we identify basic theorems of third-order arithmetic, e.g.\ concerning semi-continuous functions, such that the sequential versions have very different logical properties.  
In particular, depending on the constructive status of the original theorem, very different and independent choice principles are needed. 
Despite these differences, the associated Reverse Mathematics, working in Kohlenbach's higher-order framework, is rather elegant and is still based at the core on \emph{weak K\"onig's lemma}.  
\end{abstract}


\maketitle
\thispagestyle{empty}

\section{Introduction and preliminares}\label{intro}
In a nutshell, we show that theorems and their \emph{sequential versions} can be behave rather differently in Kohlenbach's higher-order Reverse Mathematics (RM for short), in contrast to second-order RM.  Nonetheless, the associated third-order RM is quite elegant and based on \emph{weak K\"onig's lemma} at its core.  We assume familiarity with Kohlenbach's higher-order RM (\cite{kohlenbach2}), including the base theory $\RCAo$.  

\smallskip
\noindent
In more detail, a theorem $T$ of mathematics often has the syntactical form: 
\begin{center}
\emph{for all $x$ satisfying $P(x)$, there exists $y$ satisfying $Q(x, y)$}.  
\end{center}
The \emph{sequential} version of $T$, denoted $T^{\seq}$, then is formulated as follows
\begin{center}
\emph{for a \emph{sequence} $(x_{n})_{n\in \N}$ such that $(\forall n\in \N)P(x_{n})$, there is a \emph{sequence} $(y_{m})_{m\in \N}$ such that $(\forall m\in \N)Q(x_{m}, y_{m})$.}
\end{center}
Kohlenbach shows in \cite{kooltje} that \emph{weak K\"onig's lemma}, denoted $\WKL_{0}$, is equivalent to $\WKL_{0}^{\seq}$ over (what we now call) the base theory $\RCAo$ from \cite{kohlenbach2}.
Other references were sequential theorems are studied in RM are \cites{fuji1,fuji2,hirstseq,dork2,dork3, kooltje, simpson2,damurm,yokoyamaphd, polahirst}.

\smallskip

In general, the theorems $T$ and $T^{\seq}$ can often be proved via the same (or similar) proof and often have the same (or similar) logical properties, esp.\ if the former are formulated in the language of second-order arithmetic (see Remark \ref{LEM2}).  In this paper, we identify basic theorems $T$ of third-order arithmetic, e.g.\ concerning semi-continuous functions, 
such that the sequential versions $T^{\seq}$ have very different logical properties.  
In particular, depending on the constructive status of the original theorem, very different and rather independent choice principles are needed.  Representative examples are the Heine-Borel theorem and its contrapositive, the Cantor intersection theorem, where the latter requires a fragment of quantifier-free countable choice, called $\CLC$ below, and the former a fragment of numerical choice involving a universal real quantifier, called $\OC^{0,0}$ below.  
Despite these differences, the associated RM of $T$ and $T^{\seq}$, in Kohlenbach's framework \cite{kohlenbach2}, is rather elegant \emph{and} is still based at the core on \emph{weak K\"onig's lemma}, the second Big Five system of RM.  
As a side-result, we obtain new equivalences over $\RCAo$, as opposed to previous equivalences over extensions of the latter with countable choice\footnote{Many equivalences provable over $\RCAo+\QFAC^{0,1}$, do not go through over $\RCAo$ (\cite{dagsamV}); here, $\QFAC^{0,1}$ is $(\forall Y^{2})\big[(\forall n\in \N)(\exists f\in \N^{\N})(Y(f, n)=0)\di(\exists \Phi^{0\di 1})(\forall n\in \N)(Y(\Phi(n), n)=0)\big]$.\label{lola}}  (see e.g.\ \cite{dagsamXIV}), as well as a connection to \emph{hyperarithmetical analysis} by Remark \ref{hecho}.

\smallskip

Finaly, the RM-study of semi-continuous functions is long overdue as the latter are central to various sub-fields of analysis, including PDEs, as discussed in detail in \cite{roytype}.
As shown in \cite{dagsamXIV, samrep}, the coding of usco functions as in \cite{ekelhaft, pekelhaft} dramatically changes the logical strength of basic properties of usco functions. 
The results in this paper shall be seen to provide more evidence for this observation, based on the independence results for $\CLC$ and $\OC^{0,0}$, as discussed in Remark \ref{indep}.

\section{Main results}\label{main}
\subsection{Introduction}\label{xintro}
In this section, we prove our main results as follows.   We assume basic familiarity with RM, esp.\ Kohlenbach's approach from \cite{kohlenbach2}.
\begin{itemize}
\item In Section \ref{PRUL}, we introduce some basic definitions that cannot be found in Kohlenbach's founding paper \cite{kohlenbach2} of higher-order RM.
\item In Section \ref{TP}, we obtain some equivalences involving $\WKL_{0}$ and basic properties of usco functions.  
\item In Section \ref{TP2}, we obtain some equivalences involving $\WKL_{0}$ and sequential versions of the theorems studied in Section \ref{TP}.
\item In Section \ref{varvka}, we discuss some variations of the aforementioned results.    
\end{itemize}
The equivalences in Section \ref{TP2} for sequential theorems split into two categories.
\begin{itemize}
\item The RM of the sequential version of the \emph{Heine-Borel theorem} involves a non-trivial instance of \emph{numerical choice}, called $\OC^{0,0}$.
\item The RM of the sequential version of the \emph{Cantor intersection theorem} involves a non-trivial instance of \emph{countable choice}, called $\CLC$. 
\end{itemize}
The same observation holds for principles with the same syntactical form, where we note that the Heine-Borel theorem is the (classical) contraposition of the Cantor intersection theorem.    
We recall that $\WKL_{0}$ (resp.\ the Cantor intersection theorem) are \emph{rejected} in constructive mathematics while the \emph{contraposition} of $\WKL_{0}$, called the (weak/decidable) \emph{fan theorem} (resp.\ the Heine-Borel theorem), is \emph{semi-constructive}, as it is accepted in Brouwerian intuitionistic mathematics (\cite{brich, ishi1}).  

\smallskip

In conclusion, the behaviour of sequential theorems depends on the constructive status of the original theorem.  
Similar observations are made in \cite{sayo, kohlenbach2, dork1,koco} and Remark \ref{LEM2}, but the behaviour in this paper can be said to be `more pronounced/wild' as $\CLC$ and $\OC^{0,0}$ are independent over fairly strong systems and \emph{hard\footnote{The systems $\Z_{2}^{\omega}$ and $\Z_{2}^{\Omega}$ from Section \ref{PRUL} are both conservative extensions of $\Z_{2}$.  However, $\Z_{2}^{\omega}$ cannot prove $\CLC$ and $\Z_{2}^{\omega}+\QFAC^{0,1}$ cannot prove $\OC^{0,0}$, while $\Z_{2}^{\Omega}$ does prove both.} to prove} by Theorems \ref{hencka} and \ref{henckb}. 

\subsection{Preliminaries}\label{PRUL}
We introduce some basic notions, like the definition of open set, that cannot be found in the founding text of higher-order RM \cite{kohlenbach2}.
Definitions take place in $\RCAo$ unless explicitly stated otherwise. 

\smallskip

First of all, open sets are represented in second-order RM by \emph{countable unions of basic open balls}, namely as in \cite{simpson2}*{II.5.6}.  
In light of \cite{simpson2}*{II.7.1}, \emph{\(codes for\) continuous functions} provide an equivalent representation (over $\RCA_{0}$). 
In particular, the latter second-order representation is exactly the following definition restricted to (codes for) continuous functions (\cite{simpson2}*{II.6.1}).  
\bdefi\label{bchar}
An \emph{open set} $U\subset \R$ is given by $h_{U}:\R\di \R$ where we say `$x\in U$' if and only if $h_{U}(x)>0$ for any $x\in \R$ and where $y\in U$ implies $(\exists N\in \N)(\forall z\in B(y, \frac{1}{2^{N}})(z\in U )$.  
A set is closed if the complement is open. 
\edefi
Since codes for continuous functions denote third-order functions in $\RCAo$ (see \cite{dagsamXIV}*{\S2}), Def.\ \ref{bchar} includes the second-order definition.  To be absolutely clear, combining \cite{dagsamXIV}*{Theorem 2.2} and \cite{simpson2}*{II.7.1}, $\RCAo$ immediately proves
\begin{center}
\emph{a code $U$ for an open set represents an open set in the sense of Def.\ \ref{bchar}}. 
\end{center}
Assuming Kleene's quantifier $(\exists^{2})$ from the next paragraph, Def.\ \ref{bchar} is equivalent to the existence of a characteristic function for open sets; the latter definition is used in e.g.\ \cite{dagsamVII, dagsamXV}.  
Thus, we may take the representation function $h_{U}$ to be \emph{lower semi-continuous} (see Def.\ \ref{flung} below) in Def.\ \ref{bchar}, even\footnote{Since $\RCAo$ is a classical system, we may invoke the law of excluded middle as in $(\exists^{2}\vee \neg(\exists^{2})$.  In case $(\exists^{2})$, note that the characteristic function of $U$ is lower semi-continuous.  In case $\neg(\exists^{2})$, all functions on the reals are continuous by \cite{kohlenbach2}*{Prop.\ 3.12}, and hence $h_{U}$ is (lower semi-) continuous.} in $\RCAo$.  

\smallskip

Secondly, full second-order arithmetic $\Z_{2}$ is the `upper limit' of second-order RM.  The systems $\Z_{2}^{\omega}$ and $\Z_{2}^{\Omega}$ are conservative extensions of $\Z_{2}$ by \cite{hunterphd}*{Cor.\ 2.6}. 
The system $\Z_{2}^{\Omega}$ is $\RCAo$ plus Kleene's quantifier $(\exists^{3})$ (see e.g.\ \cite{dagsamXIV} or \cite{hunterphd}), while $\Z_{2}^{\omega}$ is $\RCAo$ plus $(\SS_{k}^{2})$ for every $k\geq 1$; the latter axiom states the existence of a functional $\SS_{k}^{2}$ deciding $\Pi_{k}^{1}$-formulas in Kleene normal form.  We write $\ACAo$ for $\RCAo+(\exists^{2})$ where the latter is as follows 
\be\label{muk}\tag{$\exists^{2}$}
(\exists E:\N^{\N}\di \{0,1\})(\forall f \in \N^{\N})\big[(\exists n\in \N)(f(n)=0) \asa E(f)=0    \big]. 
\ee
Over $\RCAo$, $(\exists^{2})$ is equivalent to the existence of Feferman's $\mu$ (see \cite{kohlenbach2}*{Prop.\ 3.9}), defined as follows for all $f\in \N^{\N}$:
\[
\mu(f):= 
\begin{cases}
n & \textup{if $n$ is the least natural number such that $f(n)=0$, }\\
0 & \textup{ if $f(n)>0$ for all $n\in \N$}
\end{cases}
.
\]
Thirdly, we shall study Baire's notion of semi-continuity first introduced in \cite{beren}.
\bdefi\label{flung} 
For $f:[0,1]\di \R$, we have the following definitions:
\begin{itemize}
\item $f$ is \emph{upper semi-continuous} at $x_{0}\in [0,1]$ if for any $k\in \N$, there is $N\in \N$ such that $(\forall y\in B(x_{0}, \frac{1}{2^{N}}))( f(y)< f(x_{0})+\frac{1}{2^{k}} )$,
\item $f$ is \emph{lower semi-continuous} at $x_{0}\in [0,1]$ if for any $k\in \N$, there is $N\in \N$ such that $(\forall y\in B(x_{0}, \frac{1}{2^{N}}))( f(y)> f(x_{0})-\frac{1}{2^{k}} )$,
\item $f$ is \emph{Baire $1$} if it is the pointwise limit of a sequence of continuous functions.
\end{itemize}
\edefi
Regarding the third item, the sequence of continuous functions is called the `Baire 1 representation of $f$'.  
We use the common abbreviations `usco' and `lsco' for the previous notions.  We say that `$f:[0,1]\di \R$ is usco' if $f$ is usco at every $x\in [0,1]$.  A set $C\subset [0,1]$ is closed (resp.\ open) if and only if the characteristic function $\mathbb{1}_{C}$ is usco (resp.\ lsco).  Since this equivalence goes through in weak systems, properties of usco functions are often equivalent to properties of closed sets, and vice versa. 

\smallskip

Finally, we discuss the behaviour of (second-order) sequential theorems in detail in the following remark, which was pointed out to us by Ulrich Kohlenbach.
\begin{rem}[Sequential theorems and the law of excluded middle]\label{LEM2}\rm
As noted in Section \ref{intro}, $\WKL_{0}$ is equivalent to $\WKL_{0}^{\seq}$ over $\RCA_{0}$ (\cite{kooltje}).   
By contrast, the sequential form $T^\seq$ of a theorem $T$ (usually) is stronger than $T$ whenever 
the proof of $T$ needs some instance $A$ of the law of excluded middle (\textsf{LEM}) for which the sequential form $A^{\seq}$ is stronger than what is needed to prove $T$.  Well-known example are Ramsey's theorem for pairs and {weak weak K\"onig's lemma} (see Section~\ref{zol}), while a more recent example may be found in \cite{koco}.  
Indeed, in the latter, the regularity of continuous mappings on compact spaces is established in $\WKL_0$ while the existence of a \emph{modulus} of regularity is seen to require $\ACA_0$.
The first proof in $\WKL_{0}$ makes use of $\Sigma^0_1$-\textsf{LEM}, for which the sequential form is $\Sigma_{1}^{0}$-comprehension, and hence $\ACA_{0}$.
\end{rem}

\subsection{Some equivalences involving weak K\"onig's lemma}\label{TP}
In this section, we obtain some new equivalences that are part of the RM of $\WKL_{0}$, including basic properties of semi-continuous functions.  
The results in \cite{martino} suggest that semi-continuity is the largest class that can be used here. 
The sequential versions of the associated principles shall be studied in Section \ref{TP2}.   

\smallskip

First of all, various versions of the countable Heine-Borel theorem are equivalent to $\WKL_{0}$ by \cite{brownphd}*{Lemma 3.13} or \cite{simpson2}*{IV.1.6}. 
We have studied these principles \emph{for open/closed sets without codes} in \cite{dagsamVII, samhabil}, including the following.
\begin{princ}[$\HBC_{\s}$]
Let $C\subseteq [0,1]$ be closed and let $(O_{n})_{n\in \N}$ be a sequence of open sets with $C\subseteq \cup_{n\in \N}O_{n}$.  Then $C\subseteq \cup_{n\leq n_{0}}O_{n}$ for some $n_{0}\in \N$.
\end{princ}
\noindent
We let $\HBC$ be $\HBC_{\s}$ restricted to sequences of \emph{basic open intervals}.  

\smallskip

Secondly, the following theorem suggests that the RM of $\HBC_{\s}$ is \emph{close} to that of $\WKL_{0}$, but not over $\RCAo$ (or the much stronger $\Z_{2}^{\omega}$). 
We believe that $\HBC$ does not imply $\HBC_{\s}$ over $\Z_{2}^{\omega}$.  We note that item \eqref{itemaz} is a generalisation of the `positivity' theorem from constructive reverse mathematics (\cite{brich}*{Cor.\ 2.8}).
\begin{thm}\label{hench}~
Over $\RCAo$, the following are equivalent:
\begin{enumerate}
\renewcommand{\theenumi}{\alph{enumi}}
\item a usco function $f:[0,1]\di \R$ is bounded above,\label{itema}
\item for any lsco $f:[0,1]\di \R^{+}$, we have $(\exists N\in \N)(\forall x\in [0,1])(f(x)>\frac{1}{2^{N}})$,\label{itemaz}
\item \(Heine-Borel\) for a sequence $(O_{n})_{n\in \N}$ of open sets such that $\cup_{n\in \N}O_{n}$ covers $[0,1]$, there is $n_{0}\in \N$ such that $\cup_{n\leq n_{0}}O_{n}$ covers $[0,1]$, \label{itemb}
\item \(Cantor intersection theorem\) for a sequence $(C_{n})_{n\in \N}$ of non-empty closed sets with $C_{n+1}\subseteq C_{n}\subseteq [0,1]$ for all $n\in \N$, $\cap_{n\in \N}C_{n}\ne \emptyset$,\label{itembp}
\item \(pointwise and uniform domination\) let $(f_{n})_{n\in \N}$ be an increasing sequence of lsco functions.  Then for usco $g:[0,1]\di \R$ and $I\equiv [0,1]$, we have: \label{fargo}
\be\label{dreuk}
(\forall x\in I)(\exists n\in \N)(f_{n}(x)>g(x))\di (\exists m\in \N)(\forall x\in I)(f_{m}(x)>g(x)),
\ee
\item the principle $\HBC_{\s}$,\label{itemc}
\item for usco $f:[0,1]\di \R$ with supremum $y$, there is $x\in [0,1]$ with $f(x)=y$,\label{itemd}
\item for usco $f:[0,1]\di \R$ with supremum $y$ and \textbf{at most one maximum}\footnote{We say that $f:[0,1]\di \R$ with supremum $y$ has \emph{at most one maximum} in case $(\forall x, x'\in [0,1])( x\ne x'\di f(x)<y\vee f(x')<y) $, a notion from constructive analysis (see e.g.\ \cite{ishberg}).}, there is $x\in [0,1]$ with $f(x)=y$.\label{itemdz}
\end{enumerate}
Over $\RCAo+\QFAC^{0,1}$, items \eqref{itema}-\eqref{itemdz} are equivalent to $\WKL_{0}$ and to 
\begin{enumerate}  
\renewcommand{\theenumi}{\alph{enumi}}
\setcounter{enumi}{8}
\item the principle $\HBC$, \label{itemf}
\item for usco $f:[0,1]\di \R$ with a Baire 1 representation, there is $x\in [0,1]$ with $(\forall y\in [0,1])(f(y)\leq f(x))$,\label{itemddd}

\item for usco $f:[0,1]\di \R$ with essential\footnote{A real $y\in \R$ is the \emph{essential supremum} of $f:[0,1]\di \R$ in case $\{ x\in [0,1]: f(x)\geq y\}$ has measure zero and $\{ x\in [0,1]: f(x)\geq y-\frac{1}{2^{k}}\}$ has positive measure for all $k\in \N$.  Notions like `measure zero' can be expressed in $\RCAo$ and $\RCA_{0}$ without recourse to the Lebesgue measure.
} supremum $y$, there is $x\in [0,1]$ with $f(x)=y$.\label{iteme}
\end{enumerate}
The system $\Z_{2}^{\omega}$ cannot prove items \eqref{itema}-\eqref{itemf} while $\Z_{2}^{\Omega}$ proves items \eqref{itema}-\eqref{iteme}. 
\end{thm}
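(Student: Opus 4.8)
The plan is to establish the cycle of equivalences over $\RCAo$ first, then upgrade to $\WKL_{0}$ over $\RCAo+\QFAC^{0,1}$, and finally verify the two meta-claims about $\Z_{2}^{\omega}$ and $\Z_{2}^{\Omega}$.

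\medskip

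\noindent\textbf{Step 1: The core equivalences over $\RCAo$.}
First I would set up the $\RCAo$-provable dictionary between usco/lsco functions and closed/open sets, using the fact (stated in Definition~\ref{flung} and the surrounding discussion) that $C$ is closed iff $\mathbb{1}_{C}$ is usco, and that by the footnote after Definition~\ref{bchar} we may take the representing function $h_{U}$ of an open set to be lsco even in $\RCAo$. The implication \eqref{itema}$\to$\eqref{itemaz} is immediate by applying \eqref{itema} to $1/f$, which is usco whenever $f$ is lsco and positive; the converse \eqref{itemaz}$\to$\eqref{itema} similarly passes to a suitable reciprocal after translating `bounded above' into a positivity statement. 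The equivalence \eqref{itemaz}$\leftrightarrow$\eqref{itemb}: given a sequence of open sets $(O_{n})$ covering $[0,1]$ with lsco representing functions $h_{n}$, the function $f(x):=\sum_{n} 2^{-n}\min(h_{n}(x),1)$ is lsco and positive on $[0,1]$, and a uniform lower bound $2^{-N}$ yields that each $x$ lies in some $O_{n}$ with $n$ bounded by an $N$-dependent quantity; conversely, from an arbitrary positive lsco $f$ one builds the open cover $O_{n}:=\{x: f(x)>2^{-n}\}$. The Heine--Borel/Cantor equivalence \eqref{itemb}$\leftrightarrow$\eqref{itembp} is the classical contraposition: a decreasing sequence of non-empty closed sets $(C_{n})$ has empty intersection iff the complements $O_{n}:=[0,1]\setminus C_{n}$ form an increasing open cover, and then $\cap_{n}C_{n}=\emptyset$ forces some finite subcover, contradicting $C_{n_{0}}\ne\emptyset$. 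For \eqref{fargo}, the domination statement, the forward direction $\eqref{itemb}\to\eqref{fargo}$ uses the cover $O_{n}:=\{x\in I: f_{n}(x)>g(x)\}$, which is open since $f_{n}-g$ is lsco; conversely \eqref{fargo}$\to$\eqref{itemb} is the special case $g=0$, $I=[0,1]$ after fattening the open sets. The equivalence with $\HBC_{\s}$ \eqref{itemc} is essentially a reformulation: $\HBC_{\s}$ is \eqref{itemb} phrased for a closed $C\subseteq[0,1]$ instead of all of $[0,1]$, and one reduces to the latter by covering the complement of $C$ by basic intervals and adjoining them. Finally the `supremum is attained' items \eqref{itemd} and \eqref{itemdz}: from \eqref{itema} one gets that $y=\sup f$ exists as a real; the closed sets $C_{k}:=\{x: f(x)\ge y-2^{-k}\}$ are non-empty and decreasing, so \eqref{itembp} gives a point in $\cap_{k}C_{k}$, which is a maximum; the `at most one maximum' version \eqref{itemdz} is formally weaker but one shows \eqref{itemdz}$\to$\eqref{itema} by the usual trick of separating a putative unbounded usco function from $0$ on a shrinking nested sequence of intervals so that the relevant `no maximum' failure is detectable without $(\exists^{2})$.

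\medskip

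\noindent\textbf{Step 2: Equivalence with $\WKL_{0}$ over $\RCAo+\QFAC^{0,1}$.}
Here I would invoke \cite{brownphd}*{Lemma 3.13} or \cite{simpson2}*{IV.1.6}, which identify $\WKL_{0}$ with the countable Heine--Borel theorem \emph{for basic open intervals with second-order codes}. The point of $\QFAC^{0,1}$ is that it lets us extract, from a sequence of open sets presented by arbitrary (third-order, possibly non-continuous) lsco functions, a sequence of second-order codes for them — more precisely, $\QFAC^{0,1}$ lets one choose, pointwise in the basic-interval index, witnesses certifying membership, collapsing \eqref{itemc} down to \eqref{itemf} ($\HBC$, restricted to basic intervals) and then to the coded statement that is literally $\WKL_{0}$. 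The additional items \eqref{itemddd} (Baire~1 representation) and \eqref{iteme} (essential supremum) are handled by the same `nested shrinking closed sets' argument as \eqref{itemd}, but now $\QFAC^{0,1}$ (or already $(\exists^{2})$, which follows from $\WKL_{0}$ here only in the presence of the third-order data) is used to locate the point: for the Baire~1 case one exploits the Baire~1 representation to make $\{x: f(x)\ge y-2^{-k}\}$ effectively closed with a code, and for the essential-supremum case the measure-theoretic hypotheses — expressible in $\RCA_{0}$ per the footnote — give positive-measure nested closed sets whose intersection is non-empty by (the second-order) Cantor intersection theorem inside $\WKL_{0}$.

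\medskip

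\noindent\textbf{Step 3: The two sharp meta-claims.}
For `$\Z_{2}^{\omega}$ cannot prove \eqref{itema}--\eqref{itemf}': since $\Z_{2}^{\omega}$ is a conservative extension of $\Z_{2}$ by \cite{hunterphd}*{Cor.\ 2.6}, it suffices to produce a model of $\Z_{2}^{\omega}$ in which, say, \eqref{itema} fails. I would use the model based on the Kleene computable functionals or the ECF-type interpretation in which all third-order objects are (codes for) continuous functions while the second-order part is a model of $\Z_{2}$; in such a model there exist lsco functions — in the weak sense of Definition~\ref{flung} at each point — whose domain-of-definition combinatorics encode a tree with no path, so that \eqref{itemaz} fails because the associated open cover genuinely has no finite subcover even though $\WKL$ holds at the second-order level. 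Concretely one transplants the standard counterexample showing $\HBC$ (uncoded) is not provable in $\Z_{2}^{\omega}$ from \cite{dagsamVII} or \cite{samhabil}. For `$\Z_{2}^{\Omega}$ proves \eqref{itema}--\eqref{iteme}': with $(\exists^{3})$ available one has a genuine characteristic-function test for arbitrary subsets of $\R$, hence $(\exists^{2})$ and full second-order comprehension, so every usco/lsco function has an actual (third-order) supremum computed by quantifying over $[0,1]$, and the nested closed sets $C_{k}$ have decidable membership; the Cantor intersection step then goes through by an explicit interval-halving construction inside $\Z_{2}^{\Omega}$, with no choice needed because $(\exists^{3})$ lets one always pick the `leftmost' admissible subinterval.

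\medskip

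\noindent The main obstacle I anticipate is \textbf{Step 2}, specifically the precise role of $\QFAC^{0,1}$: one must check that the passage from a sequence of open sets given by arbitrary lsco representing functions to a sequence of second-order codes really is a consequence of $\QFAC^{0,1}$ and not something stronger, and that this is exactly the gap that fails over $\RCAo$ alone (cf.\ footnote~\ref{lola} and \cite{dagsamV}). Getting the `at most one maximum' item \eqref{itemdz} to imply \eqref{itema} over plain $\RCAo$ — i.e.\ without accidentally using $(\exists^{2})$ to detect where the function is large — is the other delicate point, and I would model that argument on the constructive `positivity' proof of \cite{brich}*{Cor.\ 2.8} adapted to the higher-order setting.
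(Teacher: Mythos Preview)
Your Step~1 is largely on the right track and in places even cleaner than the paper's route (e.g.\ your \eqref{itemaz}$\leftrightarrow$\eqref{itemb} via the weighted sum $\sum_{n}2^{-n}\min(h_{n}(x),1)$ works, since the tail beyond $N$ contributes at most $2^{-N}$), and the reduction \eqref{itemb}$\to$\eqref{itemc} really is as easy as adjoining the single open set $[0,1]\setminus C$ to the cover, not ``basic intervals'' as you wrote.  But two points need correction.

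\smallskip

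\textbf{The role of $\QFAC^{0,1}$ in Step~2 is misidentified.}  You claim $\QFAC^{0,1}$ lets one ``extract a sequence of second-order codes'' for open sets given by arbitrary lsco representatives.  That is not what $\QFAC^{0,1}$ does, and in fact such a coding principle is \emph{not} provable in $\Z_{2}^{\omega}$ (this is precisely why items \eqref{itema}--\eqref{itemf} fail there).  The paper uses $\QFAC^{0,1}$ only for the direction $\WKL_{0}\to\eqref{itema}$: assuming $f$ is usco and unbounded, one has $(\forall n)(\exists x\in[0,1])(f(x)>n)$, and $\QFAC^{0,1}$ yields a sequence $(x_{n})_{n}$ with $f(x_{n})>n$.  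Since an unbounded usco $f$ is discontinuous, $(\exists^{2})$ and hence $\ACA_{0}$ are available, so sequential compactness gives a convergent subsequence with limit $y$; then $f$ cannot be usco at $y$.  The reverse direction \eqref{itema}$\to\WKL_{0}$ needs no $\QFAC^{0,1}$: the second-order content of any of \eqref{itema}--\eqref{itemdz} already implies $\WKL_{0}$ by the classical coded results.  For \eqref{itemddd} and \eqref{iteme} the paper does not use your ``level sets have codes'' idea (which fails: $\{x:f(x)\geq c\}$ for Baire~1 $f$ is only $\Pi^{0}_{2}$ in general); instead it invokes the cited fact that bounded Baire~1 functions have suprema in $\RCAo+\WKL$, and for \eqref{iteme} simply replaces $f$ by the truncation $\tilde f(x)=\min(f(x),y)$, which is again usco with supremum $y$, reducing to \eqref{itemd}.

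\smallskip

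\textbf{The worry about $(\exists^{2})$ in \eqref{itemdz}$\to$\eqref{itema} is misplaced.}  You propose to avoid $(\exists^{2})$ via a ``shrinking intervals'' trick.  This is unnecessary: if $f$ is usco and unbounded then $f$ is discontinuous, whence $(\exists^{2})$ follows outright by \cite{kohlenbach2}*{Prop.~3.12}.  The paper then uses $(\exists^{2})$ to define $g(x):=\sum_{n=0}^{\lfloor f(x)\rfloor}\tfrac{1}{n!}$, which is usco with $\sup g=e$ never attained; since $g$ trivially has at most one maximum, \eqref{itemdz} yields a contradiction.  Your proposed constructive detour is both vaguer and not needed.
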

\begin{proof}
First of all, the equivalence between items \eqref{itemb} and \eqref{itembp} (resp.\ items \eqref{itema} and \eqref{itemaz}) amounts to a manipulation of definitions.   
Now assume item \eqref{itema} and let $(O_{n})_{n\in \N}$ be a sequence of open sets such that $\cup_{n\in \N}O_{n}$ covers $[0,1]$.  
Apply\footnote{Technically, we apply $\QFAC^{1,0}$ to the formula at hand where `$(\forall x\in [0,1])$' is replaced by `$(\forall f\in 2^{\N})$' and where `$x$' is replaced by `$\r(f)$', which is $\sum_{n=0}^{\infty}\frac{f(i)}{2^{i+1}}$ by definition.\label{floef}} $\QFAC^{1,0}$, included in $\RCAo$, to the following formula 
\be\label{kion}
(\forall x\in [0,1])(\exists n\in \N)(x\in O_{n})
\ee
and let $f:[0,1]\di \R$ be the associated function.  
By definition, $f$ is usco and therefore bounded above, i.e.\ item \eqref{itemb} follows.  
Now assume item \eqref{itemb} and let $f:[0,1]\di \R$ be usco. 
Essentially by definition, the set $C_{n}:=\{  x\in [0,1]: f(x)\geq n\}$ is closed.   Note that $O_{n}:=[0,1]\setminus C_{n}$ is such that $\cup_{n}O_{n}$ covers $[0,1]$.
Applying item~\eqref{itemb}, we find an upper bound to $f$, i.e.\ item \eqref{itema} follows.  

\smallskip

Note that in item \eqref{fargo}, we may assume $g(x)=0$ for all $x\in [0,1]$ as $h_{n}(x)=f_{n}(x)-g(x)$ is also lsco.  
To prove item \eqref{fargo} from item \eqref{itemb}, note that $O_{n}:=\{x\in [0,1]:f_{n}(x)>0   \}  $ is open for lsco $f_{n}$.
Moreover, the antecedent of \eqref{dreuk} implies that $\cup_{n\in \N}O_{n}$ covers $[0,1]$.  Item \eqref{fb} provides $n_{0}\in \N$ such that $\cup_{n\leq n_{0}}O_{n}$ covers $[0,1]$. 
Since $(f_{n})_{n\in \N}$ is increasing, $m=n_{0}$ satisfies the consequent of \eqref{dreuk}.  For the reversal, let $(O_{n})_{n\in \N}$ be an open covering of $[0,1]$ and let $f_{n}:[0,1]\di \R$ be the (lsco) representation of the open set $\cup_{i\leq n}O_{k}$.   Then $(f_{n})_{n\in \N}$ is increasing and satisfies $(\forall x\in [0,1])(\exists n\in \N)(f_{n}(x)>0)$.  
By item \eqref{fargo}, there is $m_{0}\in \N$ with $(\forall x\in [0,1])(f_{m_{0}}(x)>0)$, implying that $[0,1]\subset \cup_{m\leq m_{0}} O_{m}$.

\smallskip

Clearly, item \eqref{itemc} implies item \eqref{itemb} and we now show that item \eqref{itema} implies item \eqref{itemc}.  
To this end, let $C$ be closed and let $(O_{n})_{n\in \N}$ be an open covering of $[0,1]$.  
In case all functions $h_{C}, h_{O_{n}}$ from Def.\ \ref{bchar} are continuous, they have RM-codes by \cite{dagsamXIV}*{Cor.\ 2.5}.
In this case, item \eqref{itemc} reduces to a second-order statement, which follows from \eqref{itema} by \cite{brownphd}*{Lemma 3.13} and \cite{dagsamXIV}*{Theorem 2.8}.  
In case one of these functions is discontinuous, we obtain $(\exists^{2})$ by \cite{kohlenbach2}*{Prop.\ 3.12}.
Now use (the equivalent) Feferman's $\mu$ to define $f:[0,1]\di \R$ as follows
\be\label{edo}
f(x):=
\begin{cases}
0 & x\not \in C\\
n & \textup{$n$ is the least natural number such that $x\in O_{n}$}
\end{cases}.
\ee
Note that the axiom of (function) extensionality as in $x=_{\R}y\di f(x)=_{\R}f(y)$ holds by definition.  
We now show that $f$ is usco, i.e.\ we can apply item \eqref{itema} to obtain $\HBC_{\s}$.  Since $C$ is closed, $f(x)=0$ implies that $f(y)=0$ for all $y\in B(x, \frac{1}{2^{N}})$ and for some $N\in \N$.
In particular, $f$ is continuous at $x$ if $x\not\in C$.  Now, in case $x\in C$ and $f(x)=n$, we have $f(y)\leq n $ for $y\in O_{n}$ by definition, i.e.\ $f$ is usco at $x$.  

\smallskip

Now assume item \eqref{itembp} and let $f:[0,1]\di \R$ be usco with supremum $y$.  
By definition $(\forall n\in \N)(\exists x\in [0,1])(f(x)\geq y-\frac{1}{2^{n}} )$ and the following sequence    
\[\textstyle
E_{n}:= \{x\in [0,1]: f(x)\geq y-\frac{1}{2^{n}} \}
\]
consists of closed and non-empty sets that are nested.  Apply item \eqref{itembp} and let $z \in \cap_{n\in \N}E_{n}$.   
Since $z\in E_{n}$ for all $n\in \N$, we must have $f(z)=y$, i.e.\ item \eqref{itemd} follows.  
Now assume item \eqref{itemd} and suppose item \eqref{itema} is false, i.e.\ there is a usco $f:[0,1]\di \R$ that is unbounded above.
Note that $f$ is necessarily discontinuous, i.e.\ $(\exists^{2})$ follows by \cite{kohlenbach2}*{Prop.\ 3.12}.
Then use $(\exists^{2})$ to define usco $g(x):= \sum_{n=0}^{\lfloor f(x)\rfloor}\frac{1}{n!}$ which satisfies $\sup_{x\in [0,1]}g(x)=e$ and $g(y)<e$ for all $y\in [0,1]$.  
This contradicts our assumption (of item \eqref{itemd}) and item \eqref{itembp} follows.  Note that the previous proof also goes through for item \eqref{itemdz} as $g$ (trivially) has at most one maximum. 
The equivalences over $\RCAo$ are now finished. 

\smallskip

For the equivalences over $\RCAo+\QFAC^{0,1}$, we now derive item \eqref{itema} in $\RCAo+\WKL_{0}+\QFAC^{0,1}$.
If $f:[0,1]\di \R$ is unbounded and usco, use $\QFAC^{0,1}$ to obtain a sequence $(x_{n})_{n\in \N}$ such that $f(x_{n})>n$ for all $n\in \N$.  
Since $(\exists^{2})\di \ACA_{0}$, sequential compactness (\cite{simpson2}*{III.2.2}) provides a convergent sub-sequence, say with limit $y\in[0,1]$.  Clearly, $f$ cannot be usco at $y$, 
a contradiction, and the former must be bounded, i.e.\ item \eqref{itema} follows. 
By \cite{dagsamXIV}*{Theorem 2.9}, a bounded Baire 1 function has a supremum in $\RCAo+\WKL$, i.e.\ item \eqref{itemddd} follows from item \eqref{itema}.  

\smallskip

For item \eqref{iteme}, one verifies that $\tilde{f}:[0,1]\di \R$ is usco if $f:[0,1]\di \R$ is usco:
\[
\tilde{f}(x):=
\begin{cases}
f(x)  & f(x)\leq y \\
y & \textup{otherwise}
\end{cases}.
\]
Hence, if $f$ has essential supremum $y$, then $\tilde{f}$ has supremum $y$.  
Thus, item \eqref{itemd} implies item \eqref{iteme}, and the latter immediately 
implies $\WKL_{0}$ via the special case for continuous functions.  

\smallskip

For the negative result, $\Z_{2}^{\omega}$ cannot prove $\HBC$ by \cite{dagsamVII}*{Theorem 3.5}.
The final sentence follows from \cite{dagsamVII}*{Theorem 4.5}, where the latter establishes that open sets have (second-order) codes in $\Z_{2}^{\Omega}$.  
\end{proof}
We recall that $\Z_{2}^{\omega}$ cannot prove the general existence of the supremum of usco functions by \cite{dagsamXIV}*{\S2.8.1}, explaining the absence of this statement in Theorem~\ref{hench}.

\smallskip

In our opinion, the equivalences between items \eqref{itema}-\eqref{itemc} in Theorem \ref{hench} are rather elegant and the only `blemish' is the need for a stronger base theory than $\RCAo$ to obtain an equivalence to $\WKL_{0}$.  
Of course, we do not \emph{need} $\QFAC^{0,1}$ in Theorem \ref{hench}: the weaker axiom $\NCC$ from \cite{dagsamIX}, provable in $\Z_{2}^{\Omega}$, suffices (exercise!).
Nonetheless, the base theory $\RCAo+\NCC$ is still an highly non-trivial extension of $\RCAo$.  An elegant solution may be found in Section \ref{TP2}.    

\smallskip

Finally, we discuss some variations of the above results. 
Now, Theorem \ref{hench} shows that certain higher-order generalisations of the RM of $\WKL_{0}$ go (slightly) beyond the latter.  
This need not be the case: over $\RCAo$, $\WKL_{0}$ is equivalent to the higher-order generalisation of $\Sigma_{1}^{0}$-separation (\cite{simpson2}*{I.11.7}), where $\varphi_{i}(n)$ is replaced by $(\exists x\in [0,1])(x\in O_{n}^{i})$ and where the latter sets are open.  Moreover, $\WKL_{0}$ is closely related to $\WWKL_{0}$ where the latter is the former restricted to trees of positive measure; we briefly sketch variations of the above results for the system $\WWKL_{0}$ in Section \ref{varvka}.  The latter also discusses the role of \emph{Cousin's lemma}, a version of the Heine-Borel theorem.  

\subsection{Sequential theorems and weak K\"onig's lemma}\label{TP2}
We obtain some equivalences involving $\WKL_{0}$ and sequential versions of the theorems studied in Section~\ref{TP}.  As noted in Section \ref{xintro}, the behaviour of sequential theorems depends on the constructive status of the original theorems.  In particular, the Heine-Borel theorem and Cantor intersection theorem are classically equivalent over $\RCAo$, but the sequential versions require different choice principles by Theorems \ref{hencka} and \ref{henckb}.  

\subsubsection{Sequential Cantor intersection theorem}\label{TPSS}
In this section, we study the sequential Cantor intersection theorem.  As it turns out, the latter has a rather elegant connection to hyperarithmetical analysis by Remark \ref{hecho}.  

\smallskip

First of all, the following principle is essential.
\begin{princ}[$\CLC$]
Let $(C_{n})_{n\in \N}$ be a sequence of non-empty closed sets in $[0,1]$.  There is a sequence $(x_{n})_{n\in \N}$ such that $x_{n}\in C_{n}$ for all $n\in \N$.
\end{princ}
Now, $\CLC$ follows from the \emph{Lindel\"of lemma} in its original form for closed subsets of $\R$ (\cite{blindeloef}).
By \cite{simpson2}*{IV.1.8}, $\WKL_{0}$ proves $\CLC$ \emph{restricted to codes for closed sets}.  
By contrast, $\CLC$ is rather hard to prove by Theorem \ref{hencka} and Remark \ref{hecho}.  
\begin{thm}\label{hencka}~
\begin{itemize}
\item The system $\Z_{2}^{\omega}$ cannot prove $\CLC$.
\item The system $\Z_{2}^{\Omega}$ or $\RCAo+\QFAC^{0,1}$ proves $\CLC$.
\item The equivalence between item \eqref{itema} of Theorem \ref{hench} and $\WKL_{0}$ is provable over $\RCAo+\CLC$. 
\end{itemize}
\end{thm}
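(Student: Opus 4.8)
The plan is to establish the three bullets of Theorem~\ref{hencka} more or less independently, since they concern three different base theories. For the first bullet, the strategy is to invoke an appropriate model of $\Z_{2}^{\omega}$ in which $\CLC$ fails. The natural candidate is the model of Kleene's countable functionals (the type structure $\Ct$), or one of the related `$\ECF$-style' models used elsewhere by the authors (cf.\ the footnote to Section~\ref{xintro} asserting that $\Z_{2}^{\omega}$ cannot prove $\CLC$). Concretely, I would argue that $\CLC$ for \emph{third-order-represented} closed sets, as opposed to coded closed sets, allows one to uniformly extract a point from each member of a sequence of non-empty closed sets, and that iterating this provides a selection functional strong enough to contradict the conservativity of $\Z_{2}^{\omega}$ over $\Z_{2}$; alternatively, and more cleanly, one cites the existing result that $\Z_{2}^{\omega}\not\vdash\CLC$ established in the authors' prior work (the footnote references exactly this), so that this bullet is really a pointer rather than a fresh argument. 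The subtlety to get right is that the representation of closed sets via $h_C$ in Def.~\ref{bchar} is genuinely third-order, so that `non-empty' here is a $\Sigma_2$-ish statement over the reals rather than something witnessable in $\Z_2$.

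For the second bullet, the two sub-claims are easy. That $\RCAo+\QFAC^{0,1}$ proves $\CLC$: given a sequence $(C_n)_{n\in\N}$ of non-empty closed sets, non-emptiness of $C_n$ says $(\exists x\in[0,1])(h_{C_n}(x)=0)$; coding $x\in[0,1]$ by $f\in 2^{\N}$ via the map $\r$ as in footnote~\ref{floef}, this becomes $(\forall n\in\N)(\exists f\in 2^{\N})(Y(f,n)=0)$ for the obvious $Y^2$ built from the $h_{C_n}$ (one needs $(\exists^2)$, or rather nothing, since the statement is already in the right syntactic form for $\QFAC^{0,1}$—but note $Y$ must actually be a type-2 functional, which requires $(\exists^2)$ to evaluate $h_{C_n}$; however $\QFAC^{0,1}$ as stated quantifies over $Y^2$, and if $\neg(\exists^2)$ then all functions on $\R$ are continuous by \cite{kohlenbach2}*{Prop.~3.12} and the selection is trivial by continuity, so we may split on $(\exists^2)\vee\neg(\exists^2)$). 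Then $\QFAC^{0,1}$ yields $\Phi^{0\to1}$ with $Y(\Phi(n),n)=0$, and $x_n:=\r(\Phi(n))$ is the desired sequence. That $\Z_{2}^{\Omega}$ proves $\CLC$: $\Z_2^{\Omega}$ proves $\QFAC^{0,1}$ (indeed it proves full countable choice for the relevant complexity), or one argues directly that with $(\exists^3)$ one has a well-ordering-free choice functional on the reals; either way this is a one-line appeal.

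For the third bullet—that the equivalence between item~\eqref{itema} of Theorem~\ref{hench} and $\WKL_0$ goes through over $\RCAo+\CLC$—the forward direction (item~\eqref{itema} $\Rightarrow$ $\WKL_0$) needs nothing beyond $\RCAo$, since item~\eqref{itema} already implies item~\eqref{iteme} via the truncation argument in the proof of Theorem~\ref{hench}, and item~\eqref{iteme} implies $\WKL_0$ through its special case for continuous functions (as noted there). So the real content is the reversal: $\RCAo+\CLC+\WKL_0\vdash$ item~\eqref{itema}. Here I would mimic the $\QFAC^{0,1}$-based proof of item~\eqref{itema} from the proof of Theorem~\ref{hench}, replacing the one use of $\QFAC^{0,1}$ by $\CLC$. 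Given a supposedly unbounded usco $f:[0,1]\to\R$, the set $C_n:=\{x\in[0,1]:f(x)\geq n\}$ is closed (essentially by definition of usco) and non-empty (by unboundedness); note $f$ is necessarily discontinuous so $(\exists^2)$ holds by \cite{kohlenbach2}*{Prop.~3.12}, hence $\ACA_0$ and in particular sequential compactness (\cite{simpson2}*{III.2.2}) are available. Apply $\CLC$ to $(C_n)_{n\in\N}$ to get $(x_n)_{n\in\N}$ with $f(x_n)\geq n$; extract a convergent subsequence with limit $y\in[0,1]$; then $f$ fails to be usco at $y$, a contradiction. Thus $f$ is bounded, which is item~\eqref{itema}.

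The main obstacle, in my estimation, is the first bullet: pinning down precisely which model witnesses $\Z_2^{\omega}\not\vdash\CLC$ and checking that $\CLC$—in the third-order formulation with $h_C$ rather than with codes—genuinely fails there rather than holding vacuously or being provable. If, as the footnote to Section~\ref{xintro} indicates, this non-provability is already in the literature (the authors' earlier papers on usco functions without codes, e.g.\ \cite{dagsamVII} or \cite{dagsamXIV}), then this bullet reduces to a citation and the whole theorem is routine given Theorem~\ref{hench}; the plan would then be simply to assemble the three citations/short arguments above.
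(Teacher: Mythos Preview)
Your arguments for the second and third bullets are correct and close to the paper's. For the third bullet, your reversal $\RCAo+\CLC+\WKL_0\vdash$ item~\eqref{itema} via the closed sets $\{x:f(x)\geq n\}$, $\CLC$, and sequential compactness is exactly the paper's proof. For the second bullet, the paper tersely invokes ``the usual interval-halving technique'' where you spell out a direct application of $\QFAC^{0,1}$ with a case split on $(\exists^2)$; either route works.

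The substantive difference is the first bullet. You correctly flag it as the main obstacle, but your two proposed resolutions---a direct model construction, or a citation to prior work suggested by the footnote---both miss. The footnote you point to is a \emph{forward reference to this very theorem}; the unprovability of $\CLC$ in $\Z_2^\omega$ is not established directly elsewhere in the literature and is precisely what is being proved here. The paper's argument is instead a clean bootstrap from the third bullet: if $\Z_2^\omega$ proved $\CLC$, then since $\Z_2^\omega\vdash\WKL_0$ trivially, the third bullet would yield $\Z_2^\omega\vdash$ item~\eqref{itema}, contradicting the final sentence of Theorem~\ref{hench} (which ultimately rests on $\Z_2^\omega\not\vdash\HBC$ from \cite{dagsamVII}*{Theorem~3.5}). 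Thus the first bullet is harvested for free once the third is in hand, and no fresh model is required. This reordering---establish the third bullet first, then derive the first as a corollary of it together with the already-known negative result for item~\eqref{itema}---is the one idea your plan lacks.
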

\begin{proof}
The second item follows via the usual interval-halving technique.  
The first item follows from the third item; indeed, if $\Z_{2}^{\omega}$ proves $\CLC$, then the third item implies that $\Z_{2}^{\omega}$ also proves item \eqref{itema} of Theorem \ref{hench}, which contradicts the final sentence of Theorem \ref{hench}.  
To establish the third item, we now derive item \eqref{itema} from Theorem \ref{hench} in $\RCAo+\WKL+\CLC$.
Suppose $f:[0,1]\di \R$ is unbounded and usco.  By \cite{dagsamXIV}*{Theorem 2.8}, continuous functions on the unit interval are bounded, i.e.\ $f$ must be discontinuous, yielding $(\exists^{2})$ by \cite{kohlenbach2}*{Prop.\ 3.12}.
Use $(\exists^{2})$ to define the following sequence of closed sets:
\be\label{zank}
 E_{n}:=\{x\in [0,1]: f(x)\geq n\}.
\ee
That $E_{n}$ is closed follows immediately from the fact that $f$ is usco; that $E_{n}$ is non-empty follows by assumption on $f$.  
Now apply $\CLC$ to obtain a sequence $(x_{n})_{n\in \N}$ such that $f(x_{n})\geq n$ for all $n\in \N$.  
Since $(\exists^{2})\di \ACA_{0}$, sequential compactness (\cite{simpson2}*{III.2.2}) provides a convergent sub-sequence, say with limit $y\in[0,1]$.  Clearly, $f$ cannot be usco at $y$, 
a contradiction, and the former must be bounded.  Note that \eqref{zank} forms a decreasing sequence to finish the proof. 
\end{proof}
One could argue that $\RCAo+\CLC$ is an acceptable base theory as the coding of open sets renders $\CLC$ \emph{restricted to codes} provable in $\WKL_{0}$ by \cite{simpson2}*{IV.1.8}.  
Following Theorem \ref{hench}, $\RCAo+\CLC$ is a more elegant base theory than $\RCAo+\QFAC^{0,1}$, as the latter is not provable in $\ZF$ while $\CLC$ is, namely by Theorem \ref{hencka}.   

\smallskip

Secondly, while the previous considerations are important, the true purpose of $\CLC$ is revealed by Theorem \ref{henchmen2} where $\RCAo$ is again the base theory. 
Note that items \eqref{itf} and \eqref{itg} in Theorem \ref{henchmen2} are sequential versions of the maximum principle for usco functions, i.e.\ item \eqref{itemd} in Theorem \ref{hench}. 
\begin{thm}\label{henchmen2}
Over $\RCAo$, the following are equivalent.
\begin{enumerate}
\renewcommand{\theenumi}{\alph{enumi}}
\item The combination of $\WKL_{0}$ and $\CLC$.\label{ite}
\item  Let $f_{n}:([0,1]\times \N)\di \R$ be usco and with supremum $y\in \R$ for all $n\in \N$.  Then there is $(x_{n})_{n\in \N}$ such that $f_{n}(x_{n})=y$ for all $n\in \N$.\label{itf}
\item Let usco $f:\R\di \R$ and the sequence $(\sup_{x\in [n, n+1]}f(x))_{n\in \N}$ be given.  There is $(x_{n})_{n\in \N}$ with $x_{n}\in [n, n+1]\wedge f(x_{n})=\sup_{x\in [n, n+1]}f(x)$ for all $n\in \N$.\label{itg}
\item The previous item with fixed $y=\sup_{x\in [n, n+1]}f(x)$ for all $n\in \N$.
\item The principle $\CLC$ plus any of the items \eqref{itema}-\eqref{iteme} from Theorem \ref{hench}.\label{ith}
\item The sequential version of the Cantor intersection theorem. \label{iti}
\item Let $(C_{n})_{n\in \N}$ be a sequence of non-empty closed sets and let $f:[0,1]\di \R$ be continuous on $C_{n}$ with $y=\sup_{x\in C_{n}}f(x)$ for all $n\in \N$.  
Then there is $(x_{n})_{n\in \N}$ with $x_{n}\in C_{n} \wedge f(x_{n})=y$ for all $n\in \N$.\label{itj}
\end{enumerate}
\end{thm}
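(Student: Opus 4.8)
The plan is to establish two cycles of implications through the seven items, together with the equivalence of (a) and (e) as a short aside. For the latter, recall that over $\RCAo+\CLC$ item \eqref{itema} of Theorem \ref{hench} is equivalent to $\WKL_{0}$ by Theorem \ref{hencka}, while over $\RCAo$ all of items \eqref{itema}--\eqref{itemdz} of Theorem \ref{hench} are equivalent; hence $\WKL_{0}+\CLC$ proves each of items \eqref{itema}--\eqref{itemdz} of Theorem \ref{hench}, and conversely $\CLC$ together with any one of them yields $\WKL_{0}$. This is exactly (a)$\leftrightarrow$(e), and it shows moreover that under (a) we may freely use the maximum principle (item \eqref{itemd} of Theorem \ref{hench}) and the Cantor intersection theorem (item \eqref{itembp} of Theorem \ref{hench}).

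For the cycle (a)$\to$(b)$\to$(c)$\to$(d)$\to$(a): assuming (a), given usco $f_{n}:[0,1]\di\R$ with common supremum $y$, the maximum principle makes each $C_{n}:=\{x\in[0,1]:f_{n}(x)\geq y\}$ non-empty, and $C_{n}$ is closed with complement represented by $x\mapsto\max(0,y-f_{n}(x))$; $\CLC$ applied to $(C_{n})_{n\in\N}$ then yields (b). For (b)$\to$(c), given usco $f:\R\di\R$ and the sequence $y_{n}:=\sup_{x\in[n,n+1]}f(x)$, the functions $g_{n}(t):=f(n+t)-y_{n}$ on $[0,1]$ are usco with common supremum $0$, so (b) provides $(t_{n})_{n\in\N}$ with $g_{n}(t_{n})=0$ and $x_{n}:=n+t_{n}$ works. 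Item (d) is the special case of (c) in which all the numbers $\sup_{x\in[n,n+1]}f(x)$ coincide, so (c)$\to$(d) is immediate. For (d)$\to$(a) we derive both $\CLC$ and item \eqref{itema} of Theorem \ref{hench}, after which Theorem \ref{hencka} gives $\WKL_{0}$. To obtain $\CLC$: given non-empty closed $(D_{n})_{n\in\N}$ in $[0,1]$, with complements represented by lsco functions $h_{n}$ (as may be assumed after the case distinction $(\exists^{2})\vee\neg(\exists^{2})$, exactly as in Section~\ref{PRUL}), rescale $D_{n}$ into $[\tfrac{1}{4},\tfrac{3}{4}]$ and glue, on each $[n,n+1]$, the usco function $-\max(0,h_{n}(\cdot))$ (precomposed with the rescaling) to a continuous penalty vanishing on $[n+\tfrac{1}{4},n+\tfrac{3}{4}]$, taking maxima at the integer endpoints; the resulting $f:\R\di\R$ is usco with $\sup_{x\in[n,n+1]}f(x)=0$ for every $n$, and any sequence of maximisers furnished by (d) decodes to a choice sequence for $(D_{n})_{n\in\N}$. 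To obtain item \eqref{itema} of Theorem \ref{hench}: were some usco $F:[0,1]\di\R$ unbounded it would be discontinuous (continuous functions on $[0,1]$ being bounded by \cite{dagsamXIV}*{Theorem 2.8}), so $(\exists^{2})$ holds by \cite{kohlenbach2}*{Prop.\ 3.12}; gluing copies of the usco function $x\mapsto\sum_{k=0}^{\lfloor F(x)\rfloor}\tfrac{1}{k!}$---which has supremum $e$ but never attains it---onto the intervals $[n,n+1]$ as above (using bump functions near the integers) gives a usco $f:\R\di\R$ with $\sup_{x\in[n,n+1]}f(x)=e$ for all $n$ while $f<e$ throughout, contradicting (d).

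For the cycle (a)$\to$(f)$\to$(g)$\to$(a): assuming (a), the Cantor intersection theorem makes each $I_{m}:=\bigcap_{n}C^{m}_{n}$ non-empty; presenting $\R\setminus I_{m}$ via the characteristic function of the open set $\bigcup_{n}(\R\setminus C^{m}_{n})$ when $(\exists^{2})$ holds, and via the usual code for a countable union of coded open sets when $\neg(\exists^{2})$, one obtains a sequence $(I_{m})_{m\in\N}$ of non-empty closed sets to which $\CLC$ applies, yielding (f). For (f)$\to$(g), given non-empty closed $(C_{n})_{n\in\N}$ and $f$ continuous on each $C_{n}$ with $\sup_{x\in C_{n}}f(x)=y$, the sets $A^{n}_{k}:=\{x\in C_{n}:f(x)\geq y-2^{-k}\}$ form, for each fixed $n$, a decreasing sequence of non-empty closed sets, so the sequence of points supplied by (f) lies in $\bigcap_{k}A^{n}_{k}=\{x\in C_{n}:f(x)=y\}$. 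For (g)$\to$(a): taking $f\equiv0$ in (g) at once yields $\CLC$; and were some usco $F:[0,1]\di\R$ unbounded, then $(\exists^{2})$---hence $\ACA_{0}$---holds, so applying $\CLC$ to the closed sets $\{x\in[0,1]:F(x)\geq n\}$ and then sequential compactness (\cite{simpson2}*{III.2.2}) yields a point at which $F$ is not usco, a contradiction; thus item \eqref{itema} of Theorem \ref{hench} holds, and $\WKL_{0}$ follows by Theorem \ref{hencka}. This closes both cycles.

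I expect the main obstacle to be the gluing construction used in (d)$\to$(a): one must turn a sequence of closed sets---or of extremal problems---into a \emph{single} usco function on $\R$ whose supremum over \emph{every} interval $[n,n+1]$ is literally one and the same real number, the awkward points being the behaviour at the integer endpoints (upper semi-continuity must survive the gluing, whence the maxima of one-sided pieces) and the fact that item (d), unlike (c), does not let these suprema be chosen freely. A secondary nuisance, recurring in the second cycle, is that forming intersections of closed sets and their characteristic functions needs $(\exists^{2})$, so each such step is routed through the case distinction $(\exists^{2})\vee\neg(\exists^{2})$, the $\neg(\exists^{2})$ branch reducing to second-order statements about coded closed sets that are handled by the classical Reverse Mathematics of $\WKL_{0}$ and \cite{simpson2}*{IV.1.8}.
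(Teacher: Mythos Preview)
Your proof is essentially correct and covers the same ground as the paper, but the routing of implications differs in a few instructive ways.

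The paper does not run a cycle through item (d): to return to (a) from (b)--(d), it simply observes that $\mathbb{1}_{C_n}$ is an usco sequence with common supremum $1$, so (b) yields $\CLC$ directly with \emph{no gluing at all}; once $\CLC$ is in hand, the non-sequential instance of (b) (or (c), (d)) gives an item of Theorem~\ref{hench}, hence $\WKL_{0}$ via Theorem~\ref{hencka}. By routing through (d) you are forced into the gluing construction you flag as the main obstacle. Your sketch is on the right track but note that the delicate points are not only the integers: if the ``continuous penalty'' literally vanishes on $[n+\tfrac14,n+\tfrac34]$, the junction point $n+\tfrac14$ becomes a spurious maximiser (or usco fails there) whenever $0\notin D_{n}$. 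The clean fix is to make the penalty match the value $-\max(0,h_{n}(0))$ at $n+\tfrac14$ (and similarly at $n+\tfrac34$) rather than vanish; alternatively, under $(\exists^{2})$, use the characteristic function of the rescaled $D_{n}$ directly, which is zero on the buffer zones and avoids the issue entirely.

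For (a)$\to$(f) the paper does not form the intersections $I_{m}$: it applies $\CLC$ once to the double-indexed family $(C_{n,m})_{n,m}$ and then uses sequential compactness (uniformly in $m$, available via $(\exists^{2})\to\ACA_{0}$) to extract limits $y_{m}\in\bigcap_{n}C_{n,m}$. Your route---Cantor intersection first, then $\CLC$ on $(I_{m})_{m}$---is equally valid and arguably more transparent, since it avoids the uniform-subsequence step. Conversely, your direct (f)$\to$(g) via the nested sets $A^{n}_{k}=\{x\in C_{n}:f(x)\geq y-2^{-k}\}$ is cleaner than the paper's somewhat cryptic claim that (g) is a ``special case'' of (e).
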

\begin{proof}
To obtain item \eqref{itf} from item \eqref{ite}, we first prove that for usco $f:[0,1]\di \R$ with supremum $y\in \R$, there is $x\in [0,1]$ with $f(x)=y$.
In case $f$ is continuous, this is immediate by \cite{dagsamXIV}*{Cor.\ 2.5} and the well-known second-order results.
In case $f$ is discontinuous, we obtain $(\exists^{2})$ by \cite{kohlenbach2}*{Prop.\ 3.12}.
Now, by definition, we have $(\forall n\in \N)(\exists x\in [0,1])(f(x)\geq y-\frac{1}{2^{n}} )$ and the following set    
\[\textstyle
E_{n}:= \{x\in [0,1]: f(x)\geq y-\frac{1}{2^{n}} \}
\]
is closed and non-empty.  Apply $\CLC$ to obtain a sequence $(x_{n})_{n\in \N}$ such that $(\forall n\in \N)(f(x_{n})\geq y-\frac{1}{2^{n}} )$.
Since $(\exists^{2})\di \ACA_{0}$, we have access to the second-order convergence theorems (see \cite{simpson2}*{III.2}).  
Let $(z_{n})_{n\in \N}$ be a convergent sub-sequence of $(x_{n})_{n\in \N}$, say with limit $y_{0}$.  Since $f$ is usco and $y$ its supremum, we have
\[\textstyle
y \ge f(y_0) \ge   \lim_{n \to \infty} f(z_{n}) \ge \lim_{n\di \infty} (y - \frac 1{2^{n}}) = y, 
\]
which implies $f(y_{0})=y$ as required.  Hence, for $(f_{n})_{n\in \N}$ a sequence of usco functions, the following set is non-empty and closed for all $n\in \N$:
\[
F_{n}:=\{x\in [0,1]:  f_{n}(x)\geq y\}
\]  
and $\CLC$ yields the sequence as in item \eqref{itf}; items \eqref{itg}-\eqref{ith} follow in the same way.   

\smallskip

To derive item \eqref{ite} from item \eqref{itf} (or items \eqref{itg}-\eqref{ith}), it suffices to obtain $\CLC$.  To this end, let $(C_{n})_{n\in \N}$ be a sequence 
of non-empty closed sets in $\R$.  Then $\mathbb{1}_{C_{n}}$ is a sequence of usco functions with supremum $1$ and applying item \eqref{itf} yields $\CLC$. 

\smallskip

Next, to prove the sequential version of the Cantor intersection theorem from item \eqref{ite}, let $(C_{n, m})_{n \in \N}$ be a sequence of non-empty closed sets such that $C_{n+1, m}\subset C_{n, m}\subset [0,1]$ for all $n,m \in \N$.  
If all functions representing $C_{n, m}$ are continuous, they have (a sequence of) codes assuming $\WKL$, by \cite{dagsamXIV}*{Cor.\ 2.5}.
The second-order proof using $\WKL$ and the Heine-Borel theorem now goes through.  
In case one of the functions representing $C_{n, m}$ is discontinuous, we obtain $(\exists^{2})$ by \cite{kohlenbach2}*{Prop.\ 3.12}.  
Then apply $\CLC$ to $(\forall n, m\in \N)(\exists x\in C_{n, m})$; the resulting sequence $(x_{n, m})_{n, m\in \N}$ has a sub-sequence for every $m\in \N$, by sequential compactness (\cite{simpson2}*{III.2}) as $(\exists^{2})\di \ACA_{0}$.    
In particular, there is non-decreasing $g\in \N^{\N}$ and $(y_{m})_{m\in \N}$ such that $(x_{g(n), m})_{n\in \N}$ is convergent to $y_{m}$, for all $m\in \N$.  Clearly, $y_{m}\in \cap_{n\in \N} C_{n, m}$ for all $m\in \N$, as required. 
That item \eqref{iti} implies $\CLC$ is immediate by considering a sequence of non-empty closed sets $(E_{k})_{k\in \N}$ and defining $C_{n, m}:= E_{m}$.  

\smallskip

Finally, note that item \eqref{itj} is a special case of item \eqref{ith}.  To derive $\CLC$ from the former, let $(C_{n})_{n\in \N}$ be a sequence of non-empty closed sets.  
Then $f(x):=1$ is continuous on $C_{n}$ with supremum equal to $1$.  
\end{proof}
Regarding the robustness of the equivalences in the previous theorem, observe that in item \eqref{ite} we can replace `$\WKL_{0}$' by the boundedness or supremum principle for most of the (many)
function classes studied in \cite{dagsamXIV}. 

\smallskip

Next, we show that $\CLC$ suffices to prove that closed sets are closed under limits.  
\begin{thm}[$\ACAo+\CLC$]\label{ploep}
The following are provable.
\begin{itemize}
\item  A set $C\subset [0,1]$ is closed if and only if it is sequentially\footnote{Any $C\subset [0,1]$ is sequentially closed if for any convergent sequence in $C$, the limit is in $C$.} closed. 
\item $\textsf{\textup{weak-}}\Sigma_{1}^{1}$-$\AC_{0}:$ for arithmetical $\varphi$, we have 
\[
(\forall n\in \N)(\exists! X\subset\N)\varphi(X, n)\di (\exists \Phi^{0\di 1})(\forall n\in \N)\varphi(\Phi(n), n).
\]
\end{itemize}
\end{thm}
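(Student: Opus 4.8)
The plan is to treat the two items separately, using a single ``present a conditionally defined point as a genuine closed set'' device for both. For the first item, the implication \emph{closed $\Rightarrow$ sequentially closed} requires no choice and already holds in $\RCAo$: if $C$ is closed, its complement $U$ is open as in Definition~\ref{bchar}, so for a convergent sequence $(x_k)_{k\in\N}$ in $C$ with limit $x$, were $x\in U$ we would get $B(x,\tfrac{1}{2^{N}})\subseteq U$ for some $N$, hence $x_k\in U$ for all large $k$, contradicting $x_k\in C$. For the converse I would work in $\ACAo$, so that $C$ carries a characteristic function and closedness reduces to the characteristic function of $[0,1]\setminus C$ satisfying the ball property of Definition~\ref{bchar}. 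Fix $x_0\notin C$; the goal is a ball about $x_0$ disjoint from $C$. If there is none, then for each $n$ the slice $C\cap\overline{B}(x_0,\tfrac{1}{2^{n}})$ is non-empty, and, granting that these slices can be presented as genuine closed sets (see the next paragraph), $\CLC$ yields $(z_n)_{n\in\N}$ with $z_n\in C$ and $|z_n-x_0|\le\tfrac{1}{2^{n}}$; then $z_n\to x_0$, so $x_0\in C$ by sequential closedness, a contradiction. The passage to the limit uses only $\ACAo\vdash\ACA_0$.

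The delicate point here is that one cannot simply feed $C\cap\overline{B}(x_0,\tfrac{1}{2^{n}})$ to $\CLC$: checking that this set satisfies the ball property of Definition~\ref{bchar} would presuppose the very statement being proved. The remedy is to transport the construction into Cantor space: using the canonical homeomorphism of $2^{\N}$ with the Cantor set $\mathcal{C}\subseteq[0,1]$ — and $(\exists^2)$, which in $\ACAo$ decides every arithmetical formula with function parameters and in particular lets us read off the ternary digits of a point of $\mathcal{C}$ — one builds, from the characteristic function of $C$, a sequence of \emph{bona fide} closed sets whose non-emptiness is exactly what sequential closedness provides, their complements being represented as in the singleton construction below. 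I would isolate this device as a lemma and reuse it in the second item.

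For the second item, the plan is as follows. Given arithmetical $\varphi$ with $(\forall n)(\exists! X)\varphi(X,n)$, first note that $(\exists^2)$ decides $\varphi$. Embed $2^{\N}$ into $[0,1]$ by $\sigma\mapsto p_\sigma:=\sum_i \tfrac{2\sigma(i)}{3^{i+1}}$, a homeomorphism onto $\mathcal{C}$ with uniquely recoverable digits. For each $n$ let $C_n\subseteq[0,1]$ be the singleton $\{p_\sigma:\varphi(\sigma,n)\}$; by unique existence this is a non-empty singleton, and it is a legitimate closed set, a representation $h_n$ of $[0,1]\setminus C_n$ being given by $h_n(y):=\mathrm{dist}(y,\mathcal{C})$ for $y\notin\mathcal{C}$, and, for $y\in\mathcal{C}$ with digit sequence $\sigma_y$, $h_n(y):=0$ if $\varphi(\sigma_y,n)$ and $h_n(y):=1$ otherwise. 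This $h_n$ is extensional, satisfies $y\in[0,1]\setminus C_n\leftrightarrow h_n(y)>0$, and the ball property holds because the complement of a singleton $\{p\}$ is open — if $y\neq p$ then $|y-p|>0$, so some $B(y,\tfrac{1}{2^{N}})$ misses $p$ — even though $p$ cannot be located; moreover $n\mapsto h_n$ is definable in $\ACAo$. Applying $\CLC$ gives $(z_n)_{n\in\N}$ with $z_n\in C_n$, so $z_n=p_{\sigma_n}$ for the unique $\sigma_n$ with $\varphi(\sigma_n,n)$; decoding, $\Phi(n):=\sigma_n$ is a functional of type $0\to1$ with $(\forall n)\varphi(\Phi(n),n)$, as required.

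In both items the crux — and the step I expect to be the real obstacle — is the same move of presenting a point defined by a condition (membership in $C$ near $x_0$, resp.\ satisfaction of $\varphi$) as a genuine closed set for $\CLC$ to act on. It is clean for the second item because unique existence turns the sets into singletons, whose complements are visibly open; for the first item one must run the analogous encoding for the non-empty slices $C\cap\overline{B}(x_0,\tfrac{1}{2^{n}})$ of a merely sequentially closed $C$, where the Cantor-space detour and more care over the representations are needed.
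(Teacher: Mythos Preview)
For the second item your approach is essentially the paper's: encode subsets of $\N$ as reals and apply $\CLC$ to the resulting singletons. The paper uses the binary expansion $\eta:[0,1]\to 2^{\N}$ (breaking ties via tails of zeros) and sets $C_n=\{x\in[0,1]:\varphi(\eta(x),n)\}$; your ternary Cantor-set embedding is a harmless variant that sidesteps the non-uniqueness of binary expansions, and your explicit verification that the complement of a singleton is open in the sense of Definition~\ref{bchar} is a detail the paper leaves implicit.

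For the first item the paper does exactly what you warn against: it applies $\CLC$ directly to $C_n:=[y-\tfrac{1}{2^n},y+\tfrac{1}{2^n}]\cap C$ without checking that these are closed in the sense of Definition~\ref{bchar}. Your circularity worry is legitimate --- indeed the ball property for $[0,1]\setminus C_n$ fails at $y$ itself, since $y\notin C_n$ yet every ball about $y$ meets $C_n$ --- so $\CLC$ as literally stated does not apply. But your proposed Cantor-space remedy does not repair this: transporting a sequentially-closed-but-not-closed set through a homeomorphism yields another such set, and the singleton trick from the second item does not carry over because the slices $C\cap\overline{B}(x_0,\tfrac{1}{2^n})$ are not singletons and ``sequential closedness'' is a universal condition that does not by itself furnish the non-emptiness of any closed set you might cook up. You have correctly identified a subtlety that the paper glosses over, but your sketch does not indicate how to manufacture genuinely closed non-empty sets here from $\CLC$ alone, and nothing in the detour you describe does so.
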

\begin{proof}
For the first item, let $C\subset [0,1]$ be closed and let $(x_{n})_{n\in \N}$ be a sequence in $C$ converging to $y\in [0,1]$.  In case $y\not \in C$, there is $N\in \N$ such that $B(y, \frac{1}{2^{N}})\cap C=\emptyset$.  
This contradicts the fact that $(x_{n})_{n\in \N}$ converges to $y$, i.e.\ $C$ is also sequentially closed.  Now let $C\subset [0,1]$ be sequentially closed and suppose it is not closed, i.e.\ there is $y\not\in C$
such that $(\forall N\in \N)(\exists x\in C)(|x-y|<\frac{1}{2^{N}})$.  Apply $\CLC$ for $C_{n}:= [y-\frac{1}{2^{n}}, y+\frac{1}{2^{n}}]\cap C$ to obtain $(x_{n})_{n\in \N}$ in $C$ converging to $y$.  Since $C$ is sequentially closed, we have $y\in C$, a contradiction. 

\smallskip

For the second item, we may view $X\subset \N$ as elements of Cantor space and vice versa.
Using the well-known interval-halving method, $(\exists^{2})$ allows us to define a functional $\eta:[0,1]\di 2^{\N}$ 
such that $\eta(x)$ is the binary expansion of $x$, with a tail of zeros if relevant.  
Now use $(\exists^{2})$ to define the sequence of singletons $C_{n}:=\{x\in[0,1]:  \varphi(\eta(x), n)  \}$ where $\varphi$ is arithmetical.  
Applying $\CLC$, we obtain the sequence $\Phi$ as in $\textsf{\textup{weak-}}\Sigma_{1}^{1}$-$\AC_{0}$.
\end{proof}
Finally, we finish this section with a remark on \emph{hyperarithmetical analysis}.
\begin{rem}\label{hecho}\rm
The notion of \emph{hyperarithmetical set} (\cite{simpson2}*{VIII.3}) gives rise to the (second-order) definition of \emph{system/statement of hyperarithmetical analyis} (see e.g.\ \cite{monta2} for the exact definition), 
which includes systems like $\Sigma_{1}^{1}$-$\textsf{CA}_{0}$ (see \cite{simpson2}*{VII.6.1}).  Montalb\'an claims in \cite{monta2} that \textsf{INDEC}, a special case of \cite{juleke}*{IV.3.3}, is the first `mathematical' statement of hyperarithmetical analysis.  The latter theorem by Jullien can be found in \cite{aardbei}*{6.3.4.(3)} and \cite{roosje}*{Lemma 10.3}.  

\smallskip

The monographs \cites{roosje, aardbei, juleke} are all `rather logical' in nature and $\textsf{INDEC}$ is the \emph{restriction} of a higher-order statement to countable linear orders in the sense of RM (\cite{simpson2}*{V.1.1}), i.e.\ such orders are given by sequences.  
By the previous, $\ACAo+\CLC$ exists \emph{in the range of hyperarithmetical analysis}, namely sitting between $\RCAo+\textsf{weak}$-$\Sigma_{1}^{1}$-$\textsf{CA}_{0}$ and $\ACAo+\QFAC^{0,1}\equiv_{\L_{2}}\Sigma_{1}^{1}$-\textsf{CA}$_{0}$ by Theorem \ref{ploep}.  
Thus, $\ACAo$ plus items \eqref{ite}-\eqref{itj} from Theorem~\ref{henchmen2} are all (rather) natural systems in the range of hyperarithmetical analysis.  
\end{rem}

\subsubsection{Sequential Heine-Borel theorem}
In this section, we study the sequential version of the Heine-Borel theorem, which does not involve $\CLC$ but does require the following `numerical choice' principle.
\begin{princ}[${\OC}^{0,0}$] 
For any increasing sequence of open sets $(O_{n})_{n\in \N}$ in $\R$: 
\be\label{zata}
(\forall n\in \N)(\exists m\in \N)([-n, n]\subset O_{m})\di (\exists g\in \N^{\N})(\forall n\in \N)([-n, n]\subset O_{g(n)}).
\ee
\end{princ}
By Theorem \ref{henckb}, $\OC^{0,0}$ is not provable from $\CLC$ and much stronger systems.  
We have the following theorem where item \eqref{fa} is `one half' of the Hahn-Kat\v{e}tov-Tong insertion theorem \cite{hahn1, kate,tong}.
\begin{thm}\label{henchmen}~
Over $\RCAo$, the following are equivalent.
\begin{enumerate}
\renewcommand{\theenumi}{\alph{enumi}}
\item $\WKL_{0}$ plus: any usco function $f:\R\di \R$ is bounded above by some continuous $g:\R\di \R$.\label{fa}
\item for a sequence of usco functions $(f_{n})_{n\in \N}$ on $[0,1]$, there is $g\in \N^{\N}$ such that $f_{n}(x)\leq g(n)$ for all $n\in \N, x\in [0,1]$.\label{fap}
\item Let $(O_{n})_{n\in \N}$ be a sequence of open sets such that $\cup_{n\in \N}O_{n}$ covers $\R$.  There is $g\in \N^{\N}$ such that for all $n\in \N$, $\cup_{m\leq g(n)}O_{m}$ covers $[-n, n]$.\label{fb}
\item $(\HBC_{\s}^{\seq})$ Let $(O_{n, m})_{n, m\in \N}$ and $(C_{n})_{n\in \N}$ be sequences of resp.\ open and closed sets in $[0,1]$ such that $\cup_{m\in \N}O_{n,m}$ covers $C_{n}$ for all $n\in \N$.  
There is $g\in \N^{\N}$ such that for all $n\in \N$, $\cup_{m\leq g(n)}O_{m}$ covers $C_{n}$. \label{fc}
\item  The combination of the following:\label{fd}
\begin{itemize}
\item any of the items \eqref{itema}-\eqref{itemdz} from Theorem \ref{hench},
\item the principle $({\OC}^{0,0})$.  
\end{itemize}
\end{enumerate}
\end{thm}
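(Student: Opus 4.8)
The theorem asserts equivalences over $\RCAo$ among five items: a $\WKL_0$-strengthened insertion statement \eqref{fa}, a uniform boundedness statement for sequences of usco functions \eqref{fap}, the numerical-choice form of sequential Heine-Borel \eqref{fb}, the sequential version $\HBC_{\s}^{\seq}$ \eqref{fc}, and finally the combination of Theorem~\ref{hench}'s core items with $\OC^{0,0}$ \eqref{fd}. The natural strategy is to prove a cycle, exactly as in the proof of Theorem~\ref{hench}: the `easy' implications are definitional manipulations, and the `hard' direction is obtaining $\WKL_0$ (plus the relevant choice) from the sequential/numerical statements. I would organise it as \eqref{fa}$\to$\eqref{fap}$\to$\eqref{fb}$\to$\eqref{fc}$\to$\eqref{fa}, and then separately show \eqref{fd} is equivalent to the others, which is really just repackaging since by Theorem~\ref{hench} each of items \eqref{itema}--\eqref{itemdz} already implies $\WKL_0$ over $\RCAo+\QFAC^{0,1}$ but here we must be careful to route everything through $\OC^{0,0}$ rather than full $\QFAC^{0,1}$.

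\textbf{The easy implications.} From \eqref{fa} to \eqref{fap}: given a sequence $(f_n)_{n\in\N}$ of usco functions on $[0,1]$, extend each to an usco function on $\R$ (e.g.\ constant outside $[0,1]$) and glue them into a single usco $F:\R\di\R$ by placing a scaled copy of $f_n$ on $[n,n+1]$, using the same trick as in the proof of item \eqref{itemaz}$\leftrightarrow$\eqref{itema}; apply \eqref{fa} to get a continuous bound $g_0$, and then read off $g(n):=\lceil \sup_{x\in[n,n+1]}g_0(x)\rceil$, which exists since continuous functions on $[n,n+1]$ are bounded by \cite{dagsamXIV}*{Theorem 2.8} (and one needs $\WKL_0$, which \eqref{fa} supplies). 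From \eqref{fap} to \eqref{fb}: given an open covering $(O_n)$ of $\R$, set $f_n$ to be (the lsco representation of) $[0,1]$ minus the open set $\cup_{m\le n}(O_m\cap(-n-1,n+1))$ suitably rescaled — more directly, for each fixed $n$ the sets $O_m\cap[-n,n]$ form an open covering of $[-n,n]$, which by item \eqref{itemb} of Theorem~\ref{hench} admits a finite subcover, so the point is to choose these bounds \emph{uniformly in $n$}; package this as a single sequence of usco functions whose boundedness functional, delivered by \eqref{fap}, yields the desired $g$. From \eqref{fb} to \eqref{fc}: the same argument relativised to a sequence of closed sets $(C_n)$ — here one invokes $\HBC_{\s}$ (item \eqref{itemc} of Theorem~\ref{hench}) for each $n$ to reduce to basic intervals, and then the numerical-choice content of \eqref{fb} (after the usual continuous/discontinuous case split on the representing functions, obtaining $(\exists^2)$ and hence $\ACA_0$ in the discontinuous case) provides the uniform bound.

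\textbf{The closing implication and the main obstacle.} From \eqref{fc} back to \eqref{fa}: taking $C_n\equiv[0,1]$ and a single open covering independent of $n$ recovers $\HBC_{\s}$, hence all of items \eqref{itema}--\eqref{itemdz} and in particular $\WKL_0$; the new content is extracting the insertion statement. Given usco $f:\R\di\R$, one wants a continuous upper bound; the idea is that on each $[n,n+1]$ the function $f$ is bounded above (by the Heine-Borel content already available), say by $b_n$, and then to build a \emph{continuous} piecewise-linear $g$ interpolating values slightly above the $b_n$ — but this needs the sequence $(b_n)_{n\in\N}$ to be obtained uniformly, which is precisely where $\OC^{0,0}$-strength enters and where \eqref{fc} must genuinely be used (not just $\HBC_{\s}$). \textbf{This is the step I expect to be the main obstacle:} one must carefully show that the sequential/numerical formulation \eqref{fc} (as opposed to the plain Heine-Borel theorem) is exactly strong enough to produce the sequence of local bounds, and conversely that $\OC^{0,0}$ by itself — which is what \eqref{fd} offers on top of Theorem~\ref{hench}'s items — suffices to close the loop back to \eqref{fc}. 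Concretely, to get \eqref{fd}$\to$\eqref{fc} I would apply the plain Heine-Borel theorem pointwise in $n$ to obtain, for each $n$, \emph{some} finite bound, phrase the resulting $(\forall n)(\exists m)$ statement as an instance of $\OC^{0,0}$ via an increasing sequence of open sets encoding `$[-n,n]$ is covered by the first $m$ sets', and read off $g$; and for \eqref{fb}$\to$\eqref{fd}, observe that \eqref{fb} literally is an $\OC^{0,0}$-type conclusion, so $\OC^{0,0}$ follows, while $\WKL_0$ (hence the Theorem~\ref{hench} items) follows from \eqref{fb} by the $n=1$ instance. The routine case splits on continuity of representing functions, yielding $(\exists^2)$ and thus $\ACA_0$ and sequential compactness, are carried out exactly as in the proofs of Theorems~\ref{hench} and \ref{hencka} and I would not belabour them.
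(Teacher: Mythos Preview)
Your overall strategy is sound and matches the paper's in spirit: both arguments route through the same ingredients (the case split on continuity yielding $(\exists^{2})$, the usco-vs-open-set dictionary, and the translation trick placing the $n$-th problem on $[n,n+1]$). The paper, however, is organised hub-and-spoke around item \eqref{fa} rather than as a cycle: it proves \eqref{fa}$\leftrightarrow$\eqref{fap} by translation, \eqref{fa}$\to$\eqref{fb} by applying $\QFAC^{1,0}$ (included in $\RCAo$) to build a single usco $f(x)=$ least $n$ with $x\in O_{n}$ and then bounding the resulting continuous dominant on $[-m,m]$, \eqref{fb}$\to$\eqref{fa} via the sublevel sets $C_{n}=\{x:f(x)\geq n\}$, and \eqref{fa}$\to$\eqref{fc} via an explicit piecewise usco function on $\R$ that encodes the $n$-th covering problem on $[n+1,n+2]$ (the analogue of \eqref{edo}). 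This explicitness is exactly what your write-up lacks at two points.

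First, your step \eqref{fb}$\to$\eqref{fc} says ``invoke $\HBC_{\s}$ pointwise, then use the numerical-choice content of \eqref{fb}''. Pointwise $\HBC_{\s}$ only gives $(\forall n)(\exists m)(\dots)$; to extract $g$ you must \emph{encode} the doubly-indexed problem as a single \eqref{fb}-instance, namely by defining open $U_{k}\subset\R$ whose trace on $[n,n+1]$ is (the translate of) $([0,1]\setminus C_{n})\cup\bigcup_{m\leq k}O_{n,m}$. You never write this down, and ``numerical-choice content'' does not substitute for it: $\OC^{0,0}$ itself is phrased in terms of open sets containing $[-n,n]$, so even the $\OC^{0,0}$ route requires this translation. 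Second, your \eqref{fc}$\to$\eqref{fa} correctly flags the uniform sequence $(b_{n})$ as the crux but stops at ``\eqref{fc} must genuinely be used''. The missing move is to take $C_{n}=[0,1]$ and $O_{n,m}=\{x\in[0,1]:f(\text{rescaled }x)<m\}$ (open since $f$ is usco) so that \eqref{fc} hands you the sequence directly; the paper does the analogous thing for \eqref{fb}$\to$\eqref{fa} via $O_{n}=\{x:f(x)<n\}$.

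One small slip: you write ``$\WKL_{0}$ (hence the Theorem~\ref{hench} items) follows from \eqref{fb} by the $n=1$ instance''. Over $\RCAo$ the implication runs the other way: $\WKL_{0}$ alone does \emph{not} give Theorem~\ref{hench}'s items (that is the whole point of Theorem~\ref{hench}'s final clause). What the $n=1$ instance of \eqref{fb} gives you directly is item~\eqref{itemb} of Theorem~\ref{hench}, from which the remaining items \eqref{itema}--\eqref{itemdz} and then $\WKL_{0}$ follow by the $\RCAo$-equivalences already established there.
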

\begin{proof}
The equivalence between items \eqref{fa} and \eqref{fap} is straightforward using translations. 
Now assume item \eqref{fa} and let $(O_{n})_{n\in \N}$ be a sequence of open sets such that $\cup_{n\in \N}O_{n}$ covers $\R$.  
Noting Footnote \ref{floef}, apply $\QFAC^{1,0}$ to: 
\be\label{tata}
(\forall x\in \R)(\exists n\in \N)(x\in O_{n}\wedge x\not \in \cup_{i<n}O_{i})
\ee
and let $f:\R\di \R$ be the associated function.
By definition, $f$ is usco and thus bounded above by a continuous $g:\R\di \R$. 
By the (sequential version of) the boundedness principle for continuous functions, there is $h\in \N^{\N}$ such that $h(m)$ is an upper bound for $g$ on $[-m,m]$ for each $m\in \N$, i.e.\ item \eqref{fb} follows by \eqref{tata}.  

\smallskip

We can (sort of) avoid the aforementioned boundedness principle by making the following case distinction:  in case the representations $h_{O_{n}}$ of $O_{n}$ are continuous functions, the latter have (a sequence of) codes by \cite{dagsamXIV}*{Cor.\ 2.5}, and \cite{simpson2}*{II.7.1 and IV.1.6} yields the required $g\in \N^{\N}$ for item \eqref{fb}; in case some representation $h_{O_{n}}$ of $O_{n}$ is discontinuous, we obtain $(\exists^{2})$ by \cite{kohlenbach2}*{Prop.\ 3.12}, and \cite{kohlenbach2}*{Prop.\ 3.14} provides a supremum functional for continuous functions which yields item~\eqref{fb}.

\smallskip

Next, assume item \eqref{fb} and let $f:\R\di \R$ be usco.  
Item \eqref{fa} trivially follows if $f$ is continuous, i.e.\ we may assume the latter to be discontinuous, yielding $(\exists^{2})$ by \cite{kohlenbach2}*{Prop.\ 3.12}.
Essentially by definition, the set $C_{n}:=\{  x\in \R: f(x)\geq n\}$ is closed.   Then $O_{n}:=[0,1]\setminus C_{n}$ is such that $\cup_{n}O_{n}$ covers $\R$.
Let $g\in \N^{\N}$ be the sequence provided by item (b) and note that $f$ is bounded above on $[-n, n]$ by $g(n)$.  
Item \eqref{fa} now readily follows using $(\exists^{2})$.   

\smallskip

Next, to prove item \eqref{fc} from item \eqref{fa}, note that we may assume $(\exists^{2})$ in the same way as in the proof of $\HBC_{\s}$ in Theorem \ref{hench}.  
Now consider the following function:
\be\label{edo2}
f(x):=
\begin{cases}
m & \textup{if $x\in [n+1,n+2]$, $x-(n+1)\in C_{n}$, and $m$ is}\\
~& \textup{the least natural number such that $x-(n+1)\in O_{n,m}$}\\
0 & \textup{ otherwise}
\end{cases},
\ee 
where the closed sets $C_{n}$ are covered by the open coverings $\cup_{m\in \N}O_{n,m}$.
As for \eqref{edo}, $f$ is usco and consider a continuous $g$ such that $f\leq g$ on $\R$.  
By the (sequential version of) the boundedness principle for continuous functions, there is $h\in \N^{\N}$ such that $h(n)$ is an upper bound for $g$ on $[-n,n]$ for each $n\in \N$, i.e.\ item \eqref{fc} follows as $h$ is as required for the latter.  
Item \eqref{fc} readily implies item \eqref{fb} by translating the sets $C_{n}$ to $[n, n+1]$.  Similarly, item \eqref{fap} is equivalent to \eqref{itema} using translations.  
 
\smallskip

The reversals for item \eqref{fd} are immediate by the previous equivalences.
In particular, one need only apply $\OC^{0,0}$ to the conclusion of the other principle at hand to obtain one of the items \eqref{fa}-\eqref{fc}.  
To derive $\OC^{0,0}$ from the latter, note that \eqref{zata} is a special case of item \eqref{fb}.
\end{proof}
Next, we establish the following properties of $\OC^{0,0}$.  Note that by the first item in Theorem \ref{henckb}, $\Z_{2}^{\omega}+\CLC$ cannot prove $\OC^{0,0}$.  
\begin{thm}\label{henckb}~
\begin{itemize}
\item The system $\Z_{2}^{\omega}+\QFAC^{0,1}$ cannot prove $\OC^{0,0}$.
\item The system $\Z_{2}^{\Omega}$ proves $\OC^{0,0}$.
\end{itemize}
\end{thm}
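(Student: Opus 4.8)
The plan is to treat the two items by rather different methods: the second is a short computation with Kleene's $(\exists^{3})$, whereas the first is an independence result requiring a model.

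For the second item, I would work in $\Z_{2}^{\Omega}=\RCAo+(\exists^{3})$ and argue directly. Let $(O_{n})_{n\in\N}$ be an increasing sequence of open sets in $\R$ satisfying the antecedent of \eqref{zata}. The predicate $P(n,m):\equiv [-n,n]\subseteq O_{m}$ unfolds to $(\forall x\in\R)\big(x\in[-n,n]\di h_{O_{m}}(x)>0\big)$, which has an arithmetical matrix in $x$ and in the sequence $(O_{m})_{m}$; hence $(\exists^{3})$ yields a functional $\chi^{2}$ with $\chi(n,m)=0\asa P(n,m)$. As $(\exists^{3})$ implies $(\exists^{2})$, Feferman's $\mu$ is available, and I would set $g(n):=\mu m\,[\chi(n,m)=0]$. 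By the antecedent of \eqref{zata} this $g\in\N^{\N}$ is total, and $(\forall n\in\N)([-n,n]\subseteq O_{g(n)})$ holds by construction; monotonicity of $(O_{n})_{n}$ is not even needed here.

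For the first item I would first reduce to a statement about semi-continuous functions, in the spirit of the proof of Theorem~\ref{hencka}. Since $\Z_{2}^{\omega}+\QFAC^{0,1}$ proves $\WKL_{0}$ and hence, by Theorem~\ref{hench}, items \eqref{itema}--\eqref{itemdz} there, Theorem~\ref{henchmen} shows that over $\Z_{2}^{\omega}+\QFAC^{0,1}$ the principle $\OC^{0,0}$ is equivalent to item \eqref{fa} of Theorem~\ref{henchmen} (whose $\WKL_{0}$-conjunct is anyway a theorem here), i.e.\ to: every usco $f:\R\di\R$ is bounded above by some continuous $g:\R\di\R$. So it suffices to build a model of $\Z_{2}^{\omega}+\QFAC^{0,1}$ containing a usco $f:\R\di\R$ with no continuous majorant. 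Such an $f$ is bounded on every $[-n,n]$---usco functions on $[0,1]$ are bounded above in $\Z_{2}^{\omega}+\QFAC^{0,1}$ by Theorem~\ref{hench}---so the obstruction has to be that the sequence $\big(\sup_{x\in[-n,n]}f(x)\big)_{n\in\N}$ is not an object of the model.

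Such a model I would build by the standard S1--S9 machinery: let $\mathcal{M}$ have higher-type part the closure of $\{\SS_{k}^{2}:k\geq 1\}\cup\{f\}$ under Kleene's schemes over a suitable countable second-order model of $\Z_{2}$, where $f:\R\di\R$ is a carefully chosen \emph{discontinuous} usco function that is not $\Pi^{1}_{k}$-definable from the $\SS_{k}^{2}$ for any $k$ and whose compact suprema encode, as $n$ grows, unboundedly much of the $\SS_{k}^{2}$-hierarchy. Then $\mathcal{M}\models\Z_{2}^{\omega}$ (the $\SS_{k}^{2}$ are present and the closure yields $\RCAo$), but $(\exists^{3})\notin\mathcal{M}$: the real quantifier ``$(\forall x\in[-n,n])$'' applied to the $f$-dependent matrix ``$f(x)\leq q$'' is beyond the reach of Kleene's schemes over $\{\SS_{k}^{2}:k\}\cup\{f\}$, so $\big(\sup_{x\in[-n,n]}f(x)\big)_{n}$---hence every continuous majorant of $f$, and hence also the covering function for the open sets $O_{n}=\R\setminus\{x:f(x)\geq n\}$---is absent, whence $\mathcal{M}\not\models\OC^{0,0}$. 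I expect the main obstacle to be carrying this out while keeping $\QFAC^{0,1}$ true in $\mathcal{M}$: the second-order base and the closure operations must be rich enough to yield, for each $Y^{2}\in\mathcal{M}$, a choice sequence as in $\QFAC^{0,1}$, yet not so rich as to resurrect the compact-supremum sequence of $f$ (a genuinely type-$3$-over-$f$ object, not a choice function). Verifying this balance, and that the pathological $f$ is indeed usco and indeed lies in $\mathcal{M}$, is the heart of the argument.
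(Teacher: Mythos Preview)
Your argument for the second item is essentially the paper's: both observe that $(\exists^{3})$ renders the predicate $[-n,n]\subset O_{m}$ decidable and then apply $\QFAC^{0,0}$ (equivalently, Feferman's~$\mu$).

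For the first item your reduction via Theorems~\ref{hench} and~\ref{henchmen} is valid, but the model construction you sketch has a genuine gap at exactly the point you yourself flag. Adding a ``carefully chosen'' usco $f$ as an extra type~2 oracle and closing under S1--S9 is not known to preserve $\QFAC^{0,1}$: the Gandy-selection argument that underlies $\QFAC^{0,1}$ in such closures requires the type~2 oracles to be \emph{normal}, and a discontinuous usco $f:\R\di\R$ typically is not. You neither construct the concrete $f$ nor verify $\QFAC^{0,1}$ in the resulting structure, so the ``heart of the argument'' is simply missing. There is also a circularity risk: the suprema $\sup_{[-n,n]}f$ you want to be \emph{absent} are expressible with $(\exists^{2})$ and $f$, so ensuring they lie outside the S1--S9 closure of $\{\SS_{k}^{2}:k\}\cup\{f\}$ is delicate.

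The paper avoids this entirely by taking a different route. It works \emph{inside} the already-established model $\mathbf{P}$ of $\Z_{2}^{\omega}+\QFAC^{0,1}$ from~\cite{dagsamX} (built under $\textsf{V}{=}\textsf{L}$), in which there is an $\SS^{2}_{\omega}$-computable injection $\phi:\mathbf{P}_{1}\di\N$ and a well-ordering $\preceq$ of $\mathbf{P}_{1}$ (via stage comparison). From $\phi$ one writes down a diagonal function $g$ that eventually dominates every element of $\mathbf{P}_{1}$. One then \emph{constructs} the open sets $O_{m}$ internally---their characteristic functions are $\SS^{2}_{\omega}$-computable, hence genuinely in $\mathbf{P}$---so that each $[-n,n]$ is covered by some $O_{m}$, yet any $g'$ satisfying the conclusion of $\OC^{0,0}$ must dominate $g$ on an infinite set, whence $g'\notin\mathbf{P}_{1}$. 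No new oracle is introduced, so the validity of $\QFAC^{0,1}$ is inherited from~\cite{dagsamX} rather than having to be re-established.
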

\begin{proof}
The second item is immediate as $\RCAo$ includes $\QFAC^{0,0}$ while $(\exists^{3})$ makes `$[-n, n]\subset O_{m}$' decidable. 
For the first item, we show that the model $\bf{P}$ of $\Z_{2}^{\omega}+\QFAC^{0,1}$ from \cite{dagsamX} satisfies $\neg \OC^{0,0}$.   
To this end, we first briefly introduce $\bf{P}$ in Definition \ref{modelp} and then prove an essential result about {\bf P} in Lemma \ref{VL}.
The first item then follows via a series of claims (Claims \eqref{c1}-\eqref{c7}).  

\smallskip
\noindent
First of all, the aforementioned model ${\bf P}$ is constructed as follows, assuming $\textsf{V=L}$.
\bdefi[The model $\bf{P}$]\label{modelp}~
\begin{itemize} \item Let ${ \SS}^2_\omega = \langle {\SS}^2_n\rangle_{n \in \N}$ where $\SS_{k}^{2}$ decides $\Pi_{k}^{1}$-formulas in Kleene normal form. 
\item Define ${\bf P}_0 =\N$ and for each finite type $\sigma =( \tau_1 , \ldots , \tau_k \rightarrow  0)$ we define ${\bf P}_\sigma$ be as the set of total maps 
\[
\Phi:{\bf P}_{\tau_1} \times \cdots \times {\bf P}_{\tau_k} \rightarrow \N
\] 
computable in ${\SS}^2_\omega$.   Then ${\bf P}_{\sigma}$ is the set of objects of finite type $\sigma$ in $\bf{P}$.
\item Using Gandy selection \(\cite{longmann}\), one verifies that $\QFAC^{0,1}$ holds in $\bf P$ and that ${\bf P}_{1 \rightarrow 0}$ contains an injection $\phi$ of ${\bf P}_1$ into $\N$.
\end{itemize}
\end{defi}
The final property of $\bf{P}$ is used in \cite{dagsamX} to show that $\Z_{2}^{\omega}+\QFAC^{0,1}$ cannot prove the uncountability of $\R$ formulated as `there is no injection from $2^{\N}$ to $\N$'.

\smallskip

Secondly, the following property of $\bf P$ may be of general interest.
\begin{lemma}\label{VL}
In $\bf P$, there is a well-ordering of $\N^\N$.
\end{lemma}
\begin{proof} 
Since we work under the assumption that \textsf{V = L}, we could have used the well-ordering of \textsf{L} restricted to {\bf P}, but we will need the construction below, based on stage comparison (\cite{longmann}), for computations relative to ${\SS}^2_\omega$.

\smallskip

Recall that the injection $\phi$ from Def.\ \ref{modelp} is such that if $\phi(f) = e$, then $e$ is an index for computing $f\in {\textbf{P}_{1}}$ from ${\SS}^2_\omega$. 
This induces an \emph{ordinal rank} $||f||$ on each $f \in {\bf P}_1$, the rank of this computation. We then define 
\[ 
f \preceq g \leftrightarrow \Big[ ||f|| < ||g|| \vee \big(||f|| = ||g|| \wedge \phi(f) \leq \phi(g)\big)\Big].
\]
Due to the stage comparison property of computations relative to a normal functional of type 2, the previous is all computable in ${\SS}^2_\omega$, and thus the well-ordering $\preceq$ is in the model $\bf P$.
\end{proof}
Thirdly, it is well-known that $\N^\N$ and $[0,1)$ are order-isomorphic, with an arithmetically defined isomorphism.  Moreover, the standard topology on $\N^\N$ corresponds to the topology on $[0,1)$ induced by half-open intervals $[p,q)$ with rational endpoints. In the construction below, we will consider $x$ both as an element of $\N^\N$ and as an element of $[0,1)$, which one will be clear from the context. We will work inside the model $\bf P$ and let $\phi$ be the injection from Def.\ \ref{modelp}.

\smallskip

Let $A$ be the range of $\phi$, i.e.\ $A = \{\phi(x) : x \in \N^\N\}$. Let $h:\N_1 \rightarrow \N_0$ enumerate $A$ in increasing order, and let $y_n = (\phi \circ h)^{-1}(n)$, i.e.\ $h(\phi(x_n)) = n$. 
Now define $g\in \N^{\N}$ as follows: $g(0) := 0$ and for $n\geq 1$ we define
\be\label{defg}\textstyle
g(n) := \sum_{k \leq n}y_k(n) + 1.
\ee
If $k < n$ we have that $g(n) > y_k(n)$, so $g$ will dominate  each element in ${\bf P}_1$ for all but finitely many inputs. Thus, no function dominating $g$ can be in ${\bf P}_1$. Our aim is to construct open sets $O_m$ in such a way that the assumption in $\OC^{0,0}$ is satisfied in $\bf P$, but that any $g'$ satisfying the conclusion will dominate  $g$ from \eqref{defg} in infinitely many points. Hence, $g'$ cannot be in the model $\bf P$.

\smallskip

Now, let $x \in [0,1)$ be given. For $n \geq 1$, we will define the relation $n - 1 + x  \not \in O_m$  computably in ${\SS}^2_\omega$, prove that the assumption in $\OC^{0,0}$ is satisfied, and then observe that the conclusion cannot be satisfied in ${\bf P}_1$. We need the following definitions. 
\begin{itemize}
\item Define $A^x := \{\phi(y) : y \preceq x\}$.
\item Let $h^x$ enumerate $A^x$ in increasing order, again starting with 1.
\item Let $n^x$ be such that $h^x(n^x) = \phi(x)$.
\item Define $y^x_k := (\phi \circ h^x)^{-1}(k)$.
\end{itemize}
We leave $n-1 +x$ out of $O_m$ if $n = n^x$ and $m <  \sum_{k \leq n}y^x_k(n)+1$, otherwise it is in. All negative reals will be in each $O_m$.
\bigskip

Since each $z \geq 0$ can be written, in a unique way, as $n-1 + x$, where $x \in [0,1)$ and $n \geq 1$, the definition of $O_m$ is complete. We will now prove the desired properties of $O_m$ through a sequence of claims, as follows. 
\begin{claim}\label{c1}
Let $x_1 \prec x_2$ be such that $n^{x_1} = n^{x_2} = n$. Then $\phi(x_2) < \phi(x_1)$. 
\end{claim}
\begin{proof} Since $A^{x_1} \subset A^{x_2}$ we must have that $h^{x_2} \leq h^{x_1}$, so, if $\phi(x_2)$ is the $n$-th element in $A^{x_2}$ while $\phi(x_1)$ is the $n$-th element in the smaller $A^{x_1}$, then we must have that $\phi(x_2) \leq \phi(x_1)$. Injectivity of $\phi$ ensures that the order is strict. \end{proof}
\begin{claim}\label{c2} 
For each $n$, there are at most finitely many $x$ with $n = n^x$
\end{claim}
\begin{proof} 
If there are infinitely many such $x$, we obtain an infinite increasing (relative to $\prec$) sequence of such elements, which contradicts Claim \ref{c1}. 
\end{proof}
\begin{claim}\label{c3} 
Each set $O_m$ is open and if $m_1 \leq m_2$, then $O_{m_1} \subseteq O_{m_2}$.
\end{claim}
\begin{proof} 
By Claim \ref{c2}, the complement of $O_m$ over each interval $[n-1,n))$ is finite, since in this interval we only leave out points of the form $n -1 + x$ where $n = n^x$.
Hence $O_{m}$ is open, while the other part follows by definition.
\end{proof}

\begin{claim}\label{c5} For each $n$ there is an $m$ such that $[-n,n] \subseteq O_m$.\end{claim}
\begin{proof} For each $k$ and $x$ such that  $n^x = k+1$, we have an explicit upper bound for when $k-1 + x$ will enter $O_m$. By Claim \ref{c2} there are only finitely many such $k-1 + x \leq n$ , so there must be an $m$ such that they all have entered $O_m$. All points $k-1+x$ where $k \neq n^x$ are in all $O_m$ by construction. \end{proof}

\begin{claim}\label{c5*} 
The following set is infinite:
\[
B=\{n\in A :(\forall x)(\phi(x)>n) \di \phi^{-1}(n) \prec x \}.
\]
\end{claim}
\begin{proof}
Let $\prec^*$ be the well-ordering of $A$ induced by $\prec$ and $\phi$.
\begin{enumerate}
\item $b_1$ is the $\prec^*$-least element of $A$.
\item $b_{k+1}$ be the $\prec^*$-least element of $\{a \in A : b_k < a\}$.
\end{enumerate}
This enumerates $B$ in increasing order, both with respect to $\prec^*$ and $<_{\N}$.  Since $A$ does not have a $\prec^*$-largest element, the enumeration goes on through $\N$.
\end{proof}
\begin{claim}\label{c6} If $\phi(x) = n \in B$, then $n^x = n$ and $y^x_k = y_k$ for all $k \leq n$. \end{claim}
\begin{proof} 
The formula $\phi(x) \in B$ just means that $A^x \cap \{0 , \ldots , \phi(x)\} = A \cap \{0 , \ldots , \phi(x)\}$, so the claim is immediate. 
\end{proof}
\begin{claim}\label{c7} If $n \in B$ and $[n-1,n) \subseteq O_m$ then $g(n) \leq m$. \end{claim}
\begin{proof} Let $n \in B$ and choose (the unique) $x$ such that $\phi(x) = n$. By the construction we leave $n^x - 1 + x$ out of $O_m$ unless $m \geq  \sum_{k \leq n}y^x_k(n)+1$. By Claim \ref{c6} this sum is exactly $g(n)$. \end{proof}

Combining Claims \ref{c3}, \ref{c5*} and \ref{c7} we see that ${\bf P}$ does not satisfy the choice principle $\OC^{0,0}$, i.e.\ the proof of Theorem \ref{henckb} is complete.
\end{proof}
\noindent
We conjecture that $\Z_{2}^{\omega}+\OC^{0,0}$ cannot prove $\CLC$. 

\smallskip
\noindent
Finally, we discuss the coding of usco functions in the light of our results.
\begin{rem}\label{indep}\rm
Semi-continuous functions are studied in \cite{ekelhaft, pekelhaft} using second-order representations.
The latter amount to including a Baire 1 representation, i.e.\ a sequence of continuous functions with pointwise limit the function at hand. 
We argue that this coding is problematic for two reasons, as follows.

\smallskip

Firstly, based on the results in \cite{dagsamXIV}, one readily shows that over $\ACAo$, the third-order statements `open sets as in Def.\ \ref{bchar} have RM-codes' and `usco functions are Baire 1' are equivalent.   
In this light, the coding of usco functions from \cite{ekelhaft, pekelhaft} seems problematic, as the associated coding principle `usco functions are Baire 1' is stronger than the four new `Big' systems studied in \cite{dagsamXI, samBIG, samBIG2}, following \cite{samBIG2}*{Figure 1}.  To be absolutely clear, adopting the coding of usco functions as in \cite{ekelhaft, pekelhaft}, one obfuscates the many new equivalences in third-order arithmetic involving the uncountability of $\R$ (\cite{samBIG}), Jordan's decomposition theorem (\cite{dagsamXI}), the Baire's category theorem (\cite{samBIG2}), and Tao's pigeon hole principle for measure (\cite{samBIG2}). 

\smallskip

A second observation is based on Remark \ref{hecho}.   By the latter, the principle $\CLC$ and the associated principle in Theorem \ref{henchmen2} give rise to 
rather natural systems \emph{in the range of hyperarithmetical analysis}.  The coding from \cites{ekelhaft, pekelhaft} of course destroys this status following \cite{dagsamXIV}*{Theorem 2.9}.  
In other words, certain properties of semi-continuous functions show a natural connection to hyperarithmetical analysis, which is destroyed by the coding in \cites{ekelhaft, pekelhaft}.
\end{rem}

\subsection{Variations}\label{varvka}
We discuss some variations of the above results based on \emph{weak weak K\"onig lemma} (Section \ref{zol}), Cousin's lemma (Section \ref{zol2}), and the Lebesgue number lemma (Section \ref{zol3}).

\subsubsection{Weak weak K\"onig's lemma}\label{zol}
The principle $\WWKL_{0}$ consists of $\RCA_{0}$ plus \emph{weak weak K\"onig's lemma} (see \cite{simpson2}*{X.1.7}), which is the restriction of $\WKL_{0}$ to trees of positive measure.  
The (rather limited) RM of $\WWKL_{0}$ includes a version of the Vitali covering theorem (\cite{simpson2}*{X.1.13}) and some basic theorems from analysis (\cite{sayo}).  
The proof of Theorem \ref{hench} can be adapted to show that over $\RCAo$, the following are equivalent.
\begin{itemize}
\item \(Vitali\) for a sequence $(O_{n})_{n\in \N}$ of open sets such that $\cup_{n\in \N}O_{n}$ covers $[0,1]$ and $k\in \N$, there is $n_{0}\in \N$ such that $\cup_{n\leq n_{0}}O_{n}$ has total length $>1-\frac{1}{2^{k}}$, \label{itemb2}
\item \(weak Cantor intersection theorem\) for a sequence $(C_{n})_{n\in \N}$ of closed sets having \textbf{positive} measure and with $C_{n+1}\subseteq C_{n}\subseteq [0,1]$ for all $n\in \N$, $\cap_{n\in \N}C_{n}\ne \emptyset$,\label{itembp2}
\item for usco $f:[0,1]\di \R$ with supremum \textbf{and} essential supremum both equal to $y$, there is $x\in [0,1]$ with $f(x)=y$.\label{itemd2}
\end{itemize}
We believe there to be more equivalences based on the Riemann integral as in \cite{sayo}. 
The sequential versions of the above items behave in the same way as for $\WKL_{0}$.  

\subsubsection{Cousin's lemma}\label{zol2}
The well-known \emph{Cousin's lemma} (\cite{bartle}) expresses compactness as follows, noting that Cousin in \cite{cousin1}*{p.\ 22} studies the below kind of coverings of closed sets in the Euclidean plane.    
\begin{princ}[Cousin's lemma]\label{CP2}
For closed $C\subset [0,1]$ and $\Psi: [0,1]\di \R^{+}$, there exist $x_{0}, \dots, x_{k}\in C$ with $C\subset \cup_{i\leq k}B(x_{i}, \Psi(x_{i}))$.  
\end{princ}
Even the restriction of Cousin's lemma to $C=[0,1]$ and $\Psi$ having bounded variation is not provable in $\Z_{2}^{\omega}+\QFAC^{0,1}$ (\cite{dagsamIII, dagsamXI, dagsamXIV}).
By contrast, the RM of $\WKL_{0}$ boasts versions of Cousin's lemma restricted to $C=[0,1]$ and $\Psi$ in well-known function classes, including lower (but not upper) semi-continuity (\cite{dagsamXIV}*{\S2.3}). 
Similar to the above proofs, one proves that the higher items imply the lower ones in $\RCAo$ plus extra induction.  
Fragments of the induction axiom are sometimes used in an essential way in second-order RM (see e.g.\ \cite{neeman}).  
\begin{itemize}
\item A usco function on the unit interval is bounded above. 
\item Cousin's lemma as in Principle \ref{CP2} for lsco $\Psi:[0,1]\di \R^{+}$.
\item The Heine-Borel theorem as in $\HBC$.  
\end{itemize}
One readily verifies that the sequential versions of the \emph{contrapositions} of $\HBC$ and Cousin's lemma behave as in Theorem \ref{henchmen2}.  
The sequential version of Cousin's lemma implies the enumeration principle (that any countable set of reals can be enumerated), which is essentially proved in \cite{dagsamX}*{\S3.1.2}.

\subsubsection{The Lebesgue number lemma}\label{zol3}
We have shown in \cite{dagsamXV} that the Lebesgue number lemma has interesting computational properties:  any functional computing the Lebesgue number of countable open coverings, 
is as strong as $\Omega_{C}$, the functional deciding whether closed sets of reals are empty or not.  
This functional is explosive as $\Omega_{C}+\SS^{2}$ computes $\SS_{2}^{2}$ where the latter decides $\Pi_{2}^{1}$-formulas.  
In the below, we show that logical properties of the Lebesgue number lemma are
a lot more tame.  

\smallskip

First of all, we establish the Lebesgue number lemma in a relatively weak system
\begin{thm}[$\ACAo+\QFAC^{0,1}$]\label{gong}
Let $(O_{n})_{n\in \N}$ be a sequence of open sets such that $[0,1]\subset \cup_{n\in \N}O_{n}$.
Then there is $k\in \N$ such that for all $a, b\in [0,1]$ with $|a-b|<\frac{1}{2^{k}}$, there is $n\in \N$ with $(a, b)\subseteq O_{n}$. 
\end{thm}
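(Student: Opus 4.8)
The plan is to extract, from the open covering $(O_n)_{n\in\N}$, a \emph{finite} subcovering of $[0,1]$ in the sense of Theorem \ref{hench}\eqref{itemb}, and then to obtain a Lebesgue number for this finite subcovering by a standard compactness-cum-continuity argument, all of which is available in $\ACAo$ once the covering is made second-order. First I would note that $\ACAo$ contains $(\exists^2)$, hence Feferman's $\mu$, so that all the relevant sets and functions become second-order objects; in particular $\ACAo$ proves $\WKL_0$. Applying $\QFAC^{1,0}$ (included in $\RCAo$, via Footnote \ref{floef}) to $(\forall x\in[0,1])(\exists n\in\N)(x\in O_n)$ yields a function $f:[0,1]\di\R$ which is usco by definition; by Theorem \ref{hench}\eqref{itema}$\leftrightarrow$\eqref{itemb}, available in $\RCAo+\WKL_0$ hence in $\ACAo$, we obtain $n_0\in\N$ with $[0,1]\subseteq\cup_{n\leq n_0}O_n$.

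Next I would produce the Lebesgue number for the finite subcovering $O_0,\dots,O_{n_0}$. Using $(\exists^2)$, define $\psi:[0,1]\di\N$ by letting $\psi(x)$ be the least $N\in\N$ such that $B(x,\tfrac{1}{2^N})\subseteq O_n$ for some $n\leq n_0$; such an $N$ exists for each $x$ because $x$ lies in some $O_n$ with $n\leq n_0$ and each $O_n$ is open in the sense of Def.\ \ref{bchar}. The function $\psi$ is well-defined and total by the finiteness of the subcovering. Apply $\QFAC^{0,1}$ — or here just $\QFAC^{1,0}$, since $\psi$ is already a functional — is not even needed: one wants a \emph{single} bound $k$, so I would instead argue that $x\mapsto\psi(x)$ is "locally bounded" and invoke a compactness argument. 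Concretely: the sets $U_j:=\{x\in[0,1]: \psi(x)<j\}$ for $j\in\N$ form an increasing sequence of open sets (openness follows since $\psi(x)<j$ entails, via the defining ball, that $\psi(y)<j$ for $y$ near $x$, up to a harmless shift in the index — one takes $U_j=\{x: B(x,\tfrac{1}{2^{j-2}})\subseteq O_n \text{ for some } n\le n_0\}$ to make this clean) whose union is $[0,1]$; applying Theorem \ref{hench}\eqref{itemb} once more (or directly $\WKL_0$-compactness for this nested open covering) yields $j_0$ with $[0,1]\subseteq U_{j_0}$, and then $k:=j_0+2$ works: for $a,b\in[0,1]$ with $|a-b|<\tfrac{1}{2^k}$, both lie in $B(a,\tfrac{1}{2^{j_0}})\subseteq O_n$ for some $n\le n_0$, whence $(a,b)\subseteq O_n$.

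The step I expect to be the main obstacle is verifying, inside $\ACAo$, that the auxiliary sets $U_j$ are genuinely open in the sense of Def.\ \ref{bchar} and that their union really is all of $[0,1]$ — i.e.\ that the "local Lebesgue index" $\psi$ is honestly locally bounded. This is where one must be careful to shift the radius (replacing $\tfrac{1}{2^N}$ by $\tfrac{1}{2^{N+1}}$ in the right places) so that membership in $U_j$ propagates to a neighbourhood; the triangle inequality does the work, but the bookkeeping of the constants is the only genuinely non-formulaic part. Everything else — the extraction of the finite subcovering, the use of $(\exists^2)$ to make $\psi$ and the $U_j$ into legitimate second-order objects, and the final $\varepsilon/2$-style estimate — is routine given Theorem \ref{hench} and the fact that $\ACAo\vdash\WKL_0$. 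Note that $\QFAC^{0,1}$ in the hypothesis is used only (if at all) to pass from the pointwise covering condition to the usco function $f$, and could plausibly be weakened or eliminated, as in the remarks following Theorem \ref{hench}; I would flag this but not pursue it here.
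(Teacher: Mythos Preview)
Your second step has a genuine gap, and it is not the constant-bookkeeping you anticipate. You propose to define $\psi(x)$ as the least $N$ with $B(x,\tfrac{1}{2^{N}})\subseteq O_{n}$ for some $n\le n_{0}$, and then to realise the sets $U_{j}$ as open sets in the sense of Def.~\ref{bchar} so that Theorem~\ref{hench}\eqref{itemb} applies again. But the defining condition $B(x,\tfrac{1}{2^{N}})\subseteq O_{n}$ unfolds to $(\forall y\in\R)\big(|y-x|<\tfrac{1}{2^{N}}\to h_{O_{n}}(y)>0\big)$, a formula carrying a genuine universal quantifier over reals. Feferman's $\mu$ (equivalently $(\exists^{2})$) decides only arithmetical conditions in type~1 parameters; it cannot decide this $\Pi^{1}_{1}$-shaped condition, and $\QFAC^{0,1}$ is a choice principle, not a comprehension principle, so it does not help either. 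Consequently you cannot build $\psi$ as a third-order function, nor can you write down a representation $h_{U_{j}}$, and without such a representation Theorem~\ref{hench}\eqref{itemb} simply does not apply to the $U_{j}$. (Reducing to rationals fails too: for a third-order open set $O$, having every rational of $B(x,r)$ in $O$ does not force $B(x,r)\subseteq O$.) A smaller slip: you write that item~\eqref{itemb} is ``available in $\RCAo+\WKL_{0}$ hence in $\ACAo$'', but the last line of Theorem~\ref{hench} says even $\Z_{2}^{\omega}$ cannot prove~\eqref{itemb}; you genuinely need $\QFAC^{0,1}$ already for your first step---which you have, so that step survives.

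The paper's proof takes a different route and never forms the erosions $U_{j}$. It argues by contradiction: if no Lebesgue number exists, then for each $k$ there are \emph{rational} $a,b\in[0,1]$ with $|a-b|<\tfrac{1}{2^{k}}$ such that $(\forall n)(\exists x\in(a,b))(x\notin O_{n})$. Two applications of $\QFAC^{0,1}$ (first to the inner $(\forall n)(\exists x)$, then---using $(\exists^{2})$ to render the matrix quantifier-free---to the outer $(\forall k)(\exists a,b,(x_{n})_{n})$) produce sequences $(a_{k}),(b_{k})$; sequential compactness of the midpoints $y_{k}=\tfrac{a_{k}+b_{k}}{2}$ yields a limit $z$, and whichever $O_{n_{0}}$ contains $z$ eventually swallows $(a_{g(k)},b_{g(k)})$, contradicting the choice of witnesses. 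No finite subcovering is extracted and no new third-order sets are manufactured. Your strategy can be repaired by replacing the $U_{j}$-step with exactly this contradiction argument restricted to $n\le n_{0}$, but then the preliminary extraction of a finite subcovering is idle.
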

\begin{proof}
Let $(O_{n})_{n\in \N}$ be a sequence of open sets such that $[0,1]\subset \cup_{n\in \N}O_{n}$.
Suppose there is no Lebesgue number, i.e.\ 
\be\label{frenk}\textstyle
(\forall k\in \N)(\exists a, b\in \Q\cap [0,1])\big[  |a-b|<\frac{1}{2^{k}}\wedge  \underline{(\forall n\in \N)(\exists x\in (a, b) (x\not\in O_{n})}\big].
\ee
Apply $\QFAC^{0,1}$ to the underlined formula in \eqref{frenk} to obtain:
\[\textstyle
(\forall k\in \N)(\exists a, b\in \Q\cap [0,1])(\exists (x_{n})_{n\in \N})\big[  |a-b|<\frac{1}{2^{k}}\wedge  (\forall n\in \N)(x_{n}\in  (a, b)\wedge x_{n}\not\in O_{n})\big].
\]
Apply $\QFAC^{0,1}$ (modulo $(\exists^{2})$ to decide arithmetical formulas) to obtain sequences of rationals $(a_{n})_{n\in \N}$, $(b_{n})_{n\in \N}$ in $[0,1]$ such that
\be\label{ferong}\textstyle
(\forall k\in \N)(\exists (x_{n})_{n\in \N})\big[  |a_{k}-b_{k}|<\frac{1}{2^{k}}\wedge  (\forall n\in \N)(x_{n}\in  (a_{k}, b_{k})\wedge x_{n}\not\in O_{n})\big].
\ee
Define $(y_{n})_{n\in \N}$ as $\frac{a_{n}+b_{n}}{2}$ and use sequential compactness (available due to $(\exists^{2})\di \ACA_{0}$ and \cite{simpson2}*{III.2}) to obtain $g\in \N^{\N}$ such that $(y_{g(n)})_{n\in \N}$ is a convergent sub-sequence, say with limit $z\in [0,1]$.  
By assumption, $B(z, \frac{1}{2^{N_{0}}})\subset O_{n_{0}}$ for some $n_{0}, N_{0}\in \N$.   Hence, for large enough $k\in\N$, we have $(a_{g(k)}, b_{g(k)})\subset O_{n_{0}}$, contradicting \eqref{ferong} and establishing the Lebesgue number lemma.    
\end{proof}
Secondly, an equivalence now readily follows assuming a small fragment of induction, namely the boundedness principle $B\Pi$.  
Fragments of the induction axiom are sometimes used in an essential way in second-order RM (see e.g.\ \cite{neeman}).  
\begin{princ}[$B\Pi$]
For $A(n,m)\equiv(\forall f\in \N^{\N})(Y(f, m,n)=0)$ and $k\in \N$:
\[
(\forall m\leq k)(\exists n\in \N)A(m,n)\di (\exists n_{0}\in \N)(\forall m\leq k)(\exists n \leq n_{0})A(m,n).
\]
\end{princ}
\begin{cor}[$\RCAo+\QFAC^{0,1}+B\Pi$]
The following are equivalent. 
\begin{itemize}
\item A usco function $f:[0,1]\di \R$ is bounded above. 
\item The principle $\WKL_{0}$.
\item The Lebesgue number lemma as in Theorem \ref{gong}.
\end{itemize}
We only need $B\Pi$ for proving the first item from the third item. 
\end{cor}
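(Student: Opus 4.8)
The plan is to establish the three implications around the cycle, reusing Theorems \ref{hench} and \ref{gong} as black boxes. The equivalence of the first item (boundedness of usco functions) with $\WKL_{0}$ is exactly Theorem \ref{hench}, item \eqref{itema}, but that equivalence is only stated there over $\RCAo+\QFAC^{0,1}$; since our base theory here is $\RCAo+\QFAC^{0,1}+B\Pi$, which extends $\RCAo+\QFAC^{0,1}$, that equivalence is available for free. So it remains to connect these two with the Lebesgue number lemma. First I would derive the Lebesgue number lemma from $\WKL_{0}$ (hence from either of the first two items): over $\RCAo+\QFAC^{0,1}$ we have $\ACAo$ as soon as we have $\WKL_{0}$? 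No — $\WKL_{0}$ does not give $\ACAo$. Instead, one argues by cases on $(\exists^{2})$: if $\neg(\exists^{2})$ then all functions on $\R$ are continuous by \cite{kohlenbach2}*{Prop.\ 3.12}, the open sets $O_{n}$ have second-order codes, and the Lebesgue number lemma for coded open coverings of $[0,1]$ is a standard consequence of $\WKL_{0}$ (via \cite{simpson2}); if $(\exists^{2})$ holds, then $\RCAo+(\exists^{2})=\ACAo$, so together with $\QFAC^{0,1}$ we are inside the hypotheses of Theorem \ref{gong} and the Lebesgue number lemma follows directly. This handles ``second item $\di$ third item'' without needing $B\Pi$.

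The remaining, and genuinely load-bearing, implication is ``third item $\di$ first item'', and this is the only place $B\Pi$ is used, as the statement of the corollary announces. So suppose the Lebesgue number lemma holds and let $f:[0,1]\di\R$ be usco; I want a bound. If $f$ is continuous the claim is immediate (continuous functions on $[0,1]$ are bounded, by \cite{dagsamXIV}*{Theorem 2.8}), so I may assume $f$ is discontinuous, which yields $(\exists^{2})$ by \cite{kohlenbach2}*{Prop.\ 3.12}. Using $(\exists^{2})$, put $O_{n}:=\{x\in[0,1]: f(x)<n\}$; each $O_{n}$ is open because $f$ is usco (around any $x_{0}$ with $f(x_{0})<n$ there is a ball on which $f<n$), and $\cup_{n}O_{n}$ covers $[0,1]$ since $f$ is real-valued. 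Apply the Lebesgue number lemma to this covering to obtain $k\in\N$ such that every interval $(a,b)\subseteq[0,1]$ of length $<\frac{1}{2^{k}}$ lies inside some single $O_{n}$.

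Now cover $[0,1]$ by the finitely many overlapping intervals $(\frac{j-1}{2^{k+1}},\frac{j+1}{2^{k+1}})$ for $j=0,\dots,2^{k+1}$, each of length $\frac{1}{2^{k}}$; for each such $j$ the Lebesgue property (after a harmless shrinking to stay strictly inside the interval, or working with the closed-interval version) gives some $n_{j}\in\N$ with that interval contained in $O_{n_{j}}$, i.e.\ $f<n_{j}$ on it. The point where $B\Pi$ enters: to conclude $(\exists N)(\forall j\le 2^{k+1})(\exists n\le N)$ with $f<n$ on the $j$-th interval — and hence to take $N$ as a global bound for $f$ on $[0,1]$ — I need to collapse the $\N$-valued choice of $n_{j}$ over the bounded range $j\le 2^{k+1}$ into a single bound, which is precisely an instance of $B\Pi$ with the $\Pi^{0}_{1}$ (indeed, using $(\exists^{2})$, arithmetical) predicate $A(j,n)\equiv$ ``$f<n$ on the $j$-th interval''. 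Having $N$, we get $f(x)<N$ for all $x\in[0,1]$, so $f$ is bounded above, which is the first item. The main obstacle is this last bookkeeping step — making sure the predicate genuinely has the $\Pi^{0}_{1}$ form required by $B\Pi$ (using $(\exists^{2})$ to decide the arithmetical matrix, and being careful that quantification over the reals in the interval is replaced, as in Footnote \ref{floef}, by quantification over $2^{\N}$ so that $A(j,n)$ is literally of the shape $(\forall f\in\N^{\N})(Y(f,j,n)=0)$) — rather than anything conceptually deep.
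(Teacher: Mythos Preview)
Your proposal is correct and follows essentially the same route as the paper. The only cosmetic difference is in the final implication: the paper derives item \eqref{itemb} of Theorem \ref{hench} (Heine-Borel for an arbitrary open covering) from the Lebesgue number lemma and then appeals to the $\RCAo$-equivalence of items \eqref{itema} and \eqref{itemb}, whereas you derive item \eqref{itema} directly by specializing to the covering $O_{n}=\{x:f(x)<n\}$ --- but the finite-partition-plus-$B\Pi$ step is identical in both cases.
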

\begin{proof}
The equivalence involving the first two items is proved in Theorem \ref{hench}. 
To derive the third item in $\RCAo+\WKL+\QFAC^{0,1}$, consider $\neg(\exists^{2})\vee (\exists^{2})$.  Use Theorem \ref{gong} in the latter case, while all functions 
are continuous in the former case by \cite{kohlenbach2}*{Prop.\ 3.12}.  Thus, all open sets have codes by \cite{dagsamXIV}*{Cor.\ 2.5} 
and the second-order proof of the countable Heine-Borel theorem as in \cite{simpson2}*{IV.1} goes through.
Thus, we obtain a finite sub-covering and Lebesgue number exist.  

\smallskip

Finally, to derive item \eqref{itemb} in Theorem \ref{hench} from the third item of the corollary, fix an open covering $(O_{n})_{n\in \N}$ of $[0,1]$ and let $N\in \N$ be such that $\frac{1}{2^{N}}$ is a Lebesgue number.  Hence, we have the following:
\[\textstyle
 (\forall i< 2^{N+1})(\exists n\in \N)\big[(\frac{i}{2^{N+1}},\frac{i+1}{2^{N+1}} )\subset O_{n}\big].
 \]   
The upper bound $n_{0}$ on $n$ provided $B\Pi$ is such that $\cup_{n\leq n_{0}}O_{n}$ covers $[0,1]$.
\end{proof}
We probably can do with less than $B\Pi$, namely a version of $\OC^{0, 0}$ where the outermost universal quantifier $(\forall n\in \N)$ in the antecedent is replaced by $(\forall n\leq k)$ for fixed $k\in \N$.
The sequential version of the Lebesgue number lemma seems provable using $\OC^{0,0}$ and a version of $B\Pi$, but the details are messy.  

\begin{ack}\rm 
We thank Ulrich Kohlenbach for his valuable advice.  
The second author was supported by the \emph{Klaus Tschira Boost Fund} via the grant Projekt KT43.
\end{ack}

\begin{bibdiv}
\begin{biblist}
\bib{beren}{book}{
  author={Baire, Ren\'{e}},
  title={Le\c {c}ons sur les fonctions discontinues},
  language={French},
  series={Les Grands Classiques Gauthier-Villars},
  note={Reprint of the 1905 original},
  publisher={\'{E}ditions Jacques Gabay, Sceaux},
  date={1995},
  pages={viii+65},
}

\bib{bartle}{book}{
  author={Bartle, Robert G.},
  title={A modern theory of integration},
  series={Graduate Studies in Mathematics},
  volume={32},
  publisher={American Mathematical Society},
  date={2001},
  pages={xiv+458},
}

\bib{ishberg}{article}{
  author={Berger, Josef},
  author={Ishihara, Hajime},
  title={Brouwer's fan theorem and unique existence in constructive analysis},
  journal={MLQ Math. Log. Q.},
  volume={51},
  date={2005},
  number={4},
  pages={360--364},
}

\bib{brich}{book}{
  author={Bridges, Douglas},
  author={Richman, Fred},
  title={Varieties of constructive mathematics},
  series={London Mathematical Society Lecture Note Series},
  volume={97},
  publisher={Cambridge University Press},
  place={Cambridge},
  date={1987},
  pages={x+149},
}

\bib{brownphd}{book}{
  author={Brown, Douglas K.},
  title={Functional analysis in weak subsystems of second-order arithmetic},
  year={1987},
  publisher={PhD Thesis, The Pennsylvania State University, ProQuest LLC},
}

\bib{cousin1}{article}{
  author={Cousin, Pierre},
  title={Sur les fonctions de $n$ variables complexes},
  journal={Acta Math.},
  volume={19},
  date={1895},
  pages={1--61},
}

\bib{dork1}{article}{
  author={Dorais, Fran{\c {c}}ois G.},
  author={Hirst, Jeffry L.},
  author={Shafer, Paul},
  title={Reverse mathematics, trichotomy and dichotomy},
  journal={J. Log. Anal.},
  volume={4},
  date={2012},
  pages={Paper 13, pp14},
}

\bib{dork2}{article}{
  author={Dorais, Fran\c {c}ois G.},
  title={Classical consequences of continuous choice principles from intuitionistic analysis},
  journal={Notre Dame J. Form. Log.},
  volume={55},
  date={2014},
  number={1},
  pages={25--39},
}

\bib{dork3}{article}{
  author={Dorais, Fran\c {c}ois G.},
  author={Dzhafarov, Damir D.},
  author={Hirst, Jeffry L.},
  author={Mileti, Joseph R.},
  author={Shafer, Paul},
  title={On uniform relationships between combinatorial problems},
  journal={Trans. Amer. Math. Soc.},
  volume={368},
  date={2016},
  number={2},
  pages={1321--1359},
}

\bib{damurm}{book}{
  author={Dzhafarov, Damir D.},
  author={Mummert, Carl},
  title={Reverse Mathematics: Problems, Reductions, and Proofs},
  publisher={Springer Cham},
  date={2022},
  pages={xix, 488},
}

\bib{ekelhaft}{article}{
  author={{Fern{\'a}ndez-Duque}, David},
  author={Shafer, Paul},
  author={Yokoyama, Keita},
  title={Ekeland's variational principle in weak and strong systems of arithmetic},
  journal={Selecta Mathematics},
  year={2020},
  pages={26:68},
}

\bib{pekelhaft}{article}{
  author={Fern\'{a}ndez-Duque, D.},
  author={Shafer, P.},
  author={Towsner, H.},
  author={Yokoyama, K.},
  title={Metric fixed point theory and partial impredicativity},
  journal={Philos. Trans. Roy. Soc. A},
  volume={381},
  date={2023},
  number={2248},
}

\bib{aardbei}{book}{
  author={Fra\"{\i }ss\'{e}, Roland},
  title={Theory of relations},
  series={Studies in Logic and the Foundations of Mathematics},
  volume={145},
  note={With an appendix by Norbert Sauer},
  publisher={North-Holland},
  date={2000},
}

\bib{fuji1}{article}{
  author={Fujiwara, Makoto},
  author={Yokoyama, Keita},
  title={A note on the sequential version of $\Pi ^1_2$ statements},
  conference={ },
  book={ series={Lecture Notes in Comput. Sci.}, volume={7921}, publisher={Springer, Heidelberg}, },
  date={2013},
  pages={171--180},
}

\bib{fuji2}{article}{
  author={Fujiwara, Makoto},
  author={Higuchi, Kojiro},
  author={Kihara, Takayuki},
  title={On the strength of marriage theorems and uniformity},
  journal={MLQ Math. Log. Q.},
  volume={60},
  date={2014},
  number={3},
}

\bib{hahn1}{article}{
  author={Hahn, Hans},
  title={\"{U}ber halbstetige und unstetige Funktionen},
  journal={Sitzungsberichte Akad. Wiss. Wien Abt. IIa},
  volume={126},
  date={1917},
  pages={91-110},
}

\bib{polahirst}{article}{
  author={Hirst, Jeffry L.},
  title={Representations of reals in reverse mathematics},
  journal={Bull. Pol. Acad. Sci. Math.},
  volume={55},
  date={2007},
  number={4},
  pages={303--316},
}

\bib{hirstseq}{article}{
  author={Hirst, Jeffry L.},
  author={Mummert, Carl},
  title={Reverse mathematics and uniformity in proofs without excluded middle},
  journal={Notre Dame J. Form. Log.},
  volume={52},
  date={2011},
  number={2},
  pages={149--162},
}

\bib{hunterphd}{book}{
  author={Hunter, James},
  title={Higher-order reverse topology},
  note={Thesis (Ph.D.)--The University of Wisconsin - Madison},
  publisher={ProQuest LLC, Ann Arbor, MI},
  date={2008},
  pages={97},
}

\bib{ishi1}{article}{
  author={Ishihara, Hajime},
  title={Reverse mathematics in Bishop's constructive mathematics},
  year={2006},
  journal={Philosophia Scientiae (Cahier Sp\'ecial)},
  volume={6},
  pages={43-59},
}

\bib{juleke}{book}{
  author={Jullien, Pierre},
  title={Contribution \`a l'\'etude des types d'ordres dispers\'es},
  series={PhD thesis, University of Marseilles},
  date={1969},
  pages={pp.\ 116},
}

\bib{kate}{article}{
  author={Kat\v {e}tov, M.},
  title={On real-valued functions in topological spaces},
  journal={Fund. Math.},
  volume={38},
  date={1951},
  pages={85--91},
}

\bib{kooltje}{article}{
  author={Kohlenbach, Ulrich},
  title={On uniform weak K\"onig's lemma},
  note={Commemorative Symposium Dedicated to Anne S. Troelstra (Noordwijkerhout, 1999)},
  journal={Ann. Pure Appl. Logic},
  volume={114},
  date={2002},
  number={1-3},
  pages={103--116},
}

\bib{kohlenbach2}{article}{
  author={Kohlenbach, Ulrich},
  title={Higher order reverse mathematics},
  conference={ title={Reverse mathematics 2001}, },
  book={ series={Lect. Notes Log.}, volume={21}, publisher={ASL}, },
  date={2005},
  pages={281--295},
}

\bib{koco}{article}{
  author={Kohlenbach, Ulrich},
  title={On the reverse mathematics and Weihrauch complexity of moduli of regularity and uniqueness},
  journal={Computability},
  volume={8},
  date={2019},
  number={3-4},
  pages={377--387},
}

\bib{blindeloef}{article}{
  author={Lindel\"of, Ernst},
  title={Sur Quelques Points De La Th\'eorie Des Ensembles},
  journal={Comptes Rendus},
  date={1903},
  pages={697--700},
}

\bib{longmann}{book}{
  author={Longley, John},
  author={Normann, Dag},
  title={Higher-order Computability},
  year={2015},
  publisher={Springer},
  series={Theory and Applications of Computability},
}

\bib{martino}{article}{
  author={Mart\'{\i }nez-Legaz, Juan Enrique},
  title={On Weierstrass extreme value theorem},
  journal={Optim. Lett.},
  volume={8},
  date={2014},
  number={1},
  pages={391--393},
}

\bib{monta2}{article}{
  author={Montalb\'{a}n, Antonio},
  title={Indecomposable linear orderings and hyperarithmetic analysis},
  journal={J. Math. Log.},
  volume={6},
  date={2006},
  number={1},
  pages={89--120},
}

\bib{neeman}{article}{
  author={Neeman, Itay},
  title={Necessary use of $\Sigma ^1_1$ induction in a reversal},
  journal={J. Symbolic Logic},
  volume={76},
  date={2011},
  number={2},
  pages={561--574},
}

\bib{dagsamIII}{article}{
  author={Normann, Dag},
  author={Sanders, Sam},
  title={On the mathematical and foundational significance of the uncountable},
  journal={Journal of Mathematical Logic, \url {https://doi.org/10.1142/S0219061319500016}},
  date={2019},
}

\bib{dagsamVII}{article}{
  author={Normann, Dag},
  author={Sanders, Sam},
  title={Open sets in Reverse Mathematics and Computability Theory},
  journal={Journal of Logic and Computation},
  volume={30},
  number={8},
  date={2020},
  pages={pp.\ 40},
}

\bib{dagsamV}{article}{
  author={Normann, Dag},
  author={Sanders, Sam},
  title={Pincherle's theorem in reverse mathematics and computability theory},
  journal={Ann. Pure Appl. Logic},
  volume={171},
  date={2020},
  number={5},
  pages={102788, 41},
}

\bib{dagsamIX}{article}{
  author={Normann, Dag},
  author={Sanders, Sam},
  title={The Axiom of Choice in Computability Theory and Reverse Mathematics},
  journal={Journal of Logic and Computation},
  volume={31},
  date={2021},
  number={1},
  pages={297-325},
}

\bib{dagsamXI}{article}{
  author={Normann, Dag},
  author={Sanders, Sam},
  title={On robust theorems due to Bolzano, Jordan, Weierstrass, and Cantor in Reverse Mathematics},
  journal={Journal of Symbolic Logic, DOI: \url {doi.org/10.1017/jsl.2022.71}},
  pages={pp.\ 51},
  date={2022},
}

\bib{dagsamX}{article}{
  author={Normann, Dag},
  author={Sanders, Sam},
  title={On the uncountability of $\mathbb {R}$},
  journal={Journal of Symbolic Logic, DOI: \url {doi.org/10.1017/jsl.2022.27}},
  pages={pp.\ 43},
  date={2022},
}

\bib{dagsamXIV}{article}{
  author={Normann, Dag},
  author={Sanders, Sam},
  title={The Biggest Five of Reverse Mathematics},
  journal={Journal for Mathematical Logic, doi: \url {https://doi.org/10.1142/S0219061324500077}},
  pages={pp.\ 56},
  date={2023},
}

\bib{dagsamXV}{article}{
  author={Normann, Dag},
  author={Sanders, Sam},
  title={On the computational properties of open sets},
  journal={Submitted, arxiv: \url {https://arxiv.org/abs/2401.09053}},
  pages={pp.\ 26},
  date={2024},
}

\bib{roosje}{book}{
  author={Rosenstein, Joseph G.},
  title={Linear orderings},
  series={Pure and Applied Mathematics},
  volume={98},
  publisher={Academic Press},
  date={1982},
  pages={xvii+487},
}

\bib{roytype}{article}{
  author={Royset, Johannes O.},
  title={Approximations of semicontinuous functions with applications to stochastic optimization and statistical estimation},
  journal={Math. Program.},
  volume={184},
  date={2020},
  number={1-2},
  pages={289--318},
}

\bib{sayo}{article}{
  author={Sanders, Sam},
  author={Yokoyama, Keita},
  title={The {D}irac delta function in two settings of {R}everse {M}athematics},
  year={2012},
  journal={Archive for Mathematical Logic},
  volume={51},
  number={1},
  pages={99-121},
}

\bib{samrep}{article}{
  author={Sanders, Sam},
  title={Representations and the foundations of mathematics},
  journal={Notre Dame J. Form. Log.},
  volume={63},
  date={2022},
  number={1},
  pages={1--28},
}

\bib{samhabil}{book}{
  author={Sanders, Sam},
  title={Some contributions to higher-order Reverse Mathematics},
  year={2022},
  publisher={Habilitationsschrift, TU Darmstadt},
}

\bib{samBIG}{article}{
  author={Sanders, Sam},
  title={Big in Reverse Mathematics: the uncountability of the real numbers},
  year={2023},
  journal={Journal of Symbolic Logic, doi:\url {https://doi.org/10.1017/jsl.2023.42}},
  pages={pp.\ 26},
}

\bib{samBIG2}{article}{
  author={Sanders, Sam},
  title={Big in Reverse Mathematics: measure and category},
  year={2023},
  journal={Journal of Symbolic Logic, doi: \url {https://doi.org/10.1017/jsl.2023.65}},
  pages={pp.\ 44},
}

\bib{simpson2}{book}{
  author={Simpson, Stephen G.},
  title={Subsystems of second order arithmetic},
  series={Perspectives in Logic},
  edition={2},
  publisher={CUP},
  date={2009},
  pages={xvi+444},
}

\bib{tong}{article}{
  author={Tong, Hing},
  title={Some characterizations of normal and perfectly normal spaces},
  journal={Duke Math. J.},
  volume={19},
  date={1952},
  pages={289--292},
}

\bib{yokoyamaphd}{book}{
  author={Yokoyama, Keita},
  title={Standard and non-standard analysis in second order arithmetic},
  series={Tohoku Mathematical Publications},
  volume={34},
  note={PhD Thesis, Tohoku University, 2007},
  place={Sendai},
  date={2009},
  pages={iv+130},
  url={http://www.math.tohoku.ac.jp/tmj/PDFofTMP/tmp34.pdf},
}

\end{biblist}
\end{bibdiv}

\bye